\newcommand{\Dx}[1]{\mathrm{d}#1}
\newcommand{\ds}{\displaystyle}
\newcommand{\supp}{\operatorname{supp}}
\newcommand{\Nn}{{\mathbb N}}
\newcommand{\Rr}{{\mathbb R}}
\newcommand{\Zz}{{\mathbb Z}}
\newcommand{\Jj}{\mathbf{J}}
\newcommand{\muk}{\mu^{(k)}}
\newcommand{\munk}{\mu^{(n-k)}}
\newcommand{\mus}{\mu_{\Sigma}}
\newcommand{\muklambda}{\mu^{(k,\lambda)}}
\newcommand{\sigone}{\tau}
\newcommand{\sigtwo}{\sigma}
\newcommand{\sigN}{N}
\DeclareMathOperator*{\limweak}{w-lim}
\newtheorem*{theorem*}{Theorem}
\newtheorem{theorem}{Theorem}
\newtheorem{lemma}[theorem]{Lemma}
\newtheorem{corollary}[theorem]{Corollary}
\newdefinition{remark}{Remark}
\newdefinition{example}{Example}
\newdefinition{definition}{Definition}
\newdefinition{assumption}{Assumption}
\begin{document}

\begin{frontmatter}
\title{Weak limits for weighted means of orthogonal polynomials}

\author{Wolfgang Erb}
\ead{erb@math.hawaii.edu}

\address{ University of Hawai'i at M\=anoa, Department of Mathematics \\
          2565 McCarthy Mall, Keller Hall 401A, Honolulu, HI, 96822}
	
\date{\today}

\begin{abstract}
This article is a first attempt to obtain weak limit formulas for weighted means of orthogonal polynomials. For this,
we introduce a new mean Nevai class that guarantees the existence of an equilibrium measure for the limit of the means. We show that for a family of 
measures in this mean Nevai class also the means of the Christoffel-Darboux kernels 
and the asymptotic distribution of the roots converge weakly to the same equilibrium measure. As a main example, 
we study the mean Nevai classes in which the equilibrium measure is the orthogonality measure of the ultraspherical polynomials. The 
respective weak limit formula can be regarded as an asymptotic weak addition formula for the corresponding class of measures. 
\end{abstract}

\begin{keyword}
Orthogonal polynomials \sep mean weak limits \sep Nevai class \sep summation methods \sep ultraspherical polynomials
% AMS Subject Classification}(2010): 
\end{keyword}

\end{frontmatter}

\section{Introduction}

A major result for weak limits of measures related to orthogonal polynomials on the real line is the following statement given in \cite[Section 4, Theorem 14]{Nevai}:

\begin{theorem}[Nevai 1979, \cite{Nevai}] \label{thm-Nevai}
Let $\mu$ be a measure in the Nevai class $M(a,b)$ with $b > 0$. If $f$ is $\mu$-measurable, bounded on $\supp(\mu)$ and Riemann integrable on $[-2b+a,a+2b]$, then
\begin{equation} \label{eq:limitNevai} \lim_{n \to \infty} \int_\Rr f(x) p_n^2(x) \Dx{\mu} = \frac{1}{\pi} \int_{a-2b}^{a+2b} f(x) \frac{\Dx{x}}{\sqrt{4 b^2-(x-a)^2}}.\end{equation}
\end{theorem}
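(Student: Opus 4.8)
The plan is to prove the stronger fact that the probability measures $\Dx{\mu_n}:=p_n^2\,\Dx{\mu}$ converge weakly to the measure $\nu$ with $\Dx{\nu}=\frac1{\pi\sqrt{4b^2-(x-a)^2}}\,\1_{(a-2b,a+2b)}(x)\,\Dx{x}$ (note the right-hand side of \eqref{eq:limitNevai} is $\int f\,\Dx{\nu}$), and then to upgrade this weak convergence to the class of test functions allowed in the statement. Since $\mu\in M(a,b)$, the Jacobi matrix $J$ of $\mu$ (off-diagonal entries $a_n\to b$, diagonal entries $b_n\to a$) is a bounded operator, so $\supp\mu=\sigma(J)$ lies in a fixed compact interval $K$, and $\int p_n^2\,\Dx{\mu}=1$ makes each $\mu_n$ a probability measure on $K$.

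First I would compute the moments. For fixed $k$, $\int x^k p_n^2\,\Dx{\mu}=\langle\delta_n,J^k\delta_n\rangle=(J^k)_{n,n}$, and because $J$ is tridiagonal this diagonal entry is a universal polynomial expression (the same for all large $n$) in the finitely many entries $a_m,b_m$ with $|m-n|\le k$. Letting $n\to\infty$ and using $a_m\to b$, $b_m\to a$ shows $(J^k)_{n,n}$ tends to that same polynomial evaluated at the constants $a,b$, i.e.\ to $\langle\delta_0,J_\infty^k\delta_0\rangle$ for the constant two-sided Jacobi operator $J_\infty=b(S+S^{-1})+a\,I$ on $\ell^2(\Zz)$. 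Diagonalising $J_\infty$ by Fourier series identifies $\langle\delta_0,J_\infty^k\delta_0\rangle$ with the $k$-th moment of the image of normalised Lebesgue measure on $\Tt$ under $\theta\mapsto a+2b\cos\theta$, which a direct substitution shows equals $\int x^k\,\Dx{\nu}$. Hence $\int x^k p_n^2\,\Dx{\mu}\to\int x^k\,\Dx{\nu}$ for every $k\ge 0$. The family $\{\mu_n\}$ being tight, Helly's selection theorem gives along any subsequence a further subsequence converging weakly to a probability measure on $K$ having the moments of $\nu$; as $\nu$ is compactly supported it is determined by its moments, so every such limit is $\nu$, and therefore $\mu_n\to\nu$ weakly.

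It remains to pass from continuous test functions to $f$ as in the theorem. Put $E=[a-2b,a+2b]$ with interior $E^\circ$. Since $\nu$ has no atoms, $\nu(E^\circ)=1$, so weak convergence gives $\mu_n(E^\circ)\to1$, hence $\mu_n(\Rr\setminus E^\circ)\to0$; as $f$ is bounded on $\supp\mu$ this yields $\big|\int_{\Rr\setminus E^\circ}f\,\Dx{\mu_n}\big|\le\|f\|_{L^\infty(\supp\mu)}\,\mu_n(\Rr\setminus E^\circ)\to0$. For the remaining part I would use that $\nu|_E$ is finite and non-atomic, with density bounded on every $[a-2b+\eta,a+2b-\eta]$ and integrable up to the endpoints: given $\eps>0$, Riemann integrability of $f$ on $E$ lets one squeeze $f$ between continuous functions $g_-\le f\le g_+$ on $E$ with $\int_E(g_+-g_-)\,\Dx{\nu}<\eps$ — approximate $f$ from above and below by Riemann step functions, control their difference against $\nu$ on the middle interval via the bounded density, and absorb a thin endpoint collar and small neighbourhoods of the finitely many jumps using boundedness of $f$ and non-atomicity of $\nu$. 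Extending $g_\pm$ continuously to $\Rr$ with the same bound, $\int_{E^\circ}g_\pm\,\Dx{\mu_n}=\int_\Rr g_\pm\,\Dx{\mu_n}-\int_{\Rr\setminus E^\circ}g_\pm\,\Dx{\mu_n}\to\int_\Rr g_\pm\,\Dx{\nu}=\int_E g_\pm\,\Dx{\nu}$, so sandwiching $\int_{E^\circ}g_-\,\Dx{\mu_n}\le\int_{E^\circ}f\,\Dx{\mu_n}\le\int_{E^\circ}g_+\,\Dx{\mu_n}$ and letting $\eps\to0$ gives $\int_{E^\circ}f\,\Dx{\mu_n}\to\int_E f\,\Dx{\nu}$; together with the vanishing of the exterior contribution this is \eqref{eq:limitNevai}.

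The routine parts are the band-limited moment identity and the method of moments. The delicate point is the last step: the equilibrium density $1/(\pi\sqrt{4b^2-(x-a)^2})$ is unbounded at the endpoints and $\mu$ may carry mass points accumulating there, so one cannot sandwich $f$ by continuous functions that are $\nu$-close to it on all of $E$; the argument must peel off a thin endpoint collar — controlled for $\nu$ by integrability and for $\mu_n$ only indirectly, via $\mu_n(\Rr\setminus E^\circ)\to0$ — and run the Riemann sandwich solely on a compact subinterval where $\nu$ has bounded density.
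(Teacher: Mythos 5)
Your argument is correct. Note first that the paper does not prove this statement at all --- it is imported verbatim from Nevai's 1979 memoir --- so the only internal point of comparison is the paper's proof of Theorem \ref{theorem-Nevaiweaklimit}, and there your weak-convergence core is essentially the same argument: computing $\int x^k p_n^2\,\Dx{\mu}=(J^k)_{n,n}$ as a fixed polynomial (sum over paths) in the finitely many recurrence coefficients near index $n$, passing to the limit entrywise, and identifying the limiting moments with those of the arcsine law via the free Jacobi operator $b(S+S^{-1})+aI$; the paper does exactly this (following Simon) for the averaged measures, but only for continuous test functions. What you add, and what the full strength of Nevai's statement requires, is the upgrade to $f$ merely bounded on $\supp(\mu)$ and Riemann integrable on $[a-2b,a+2b]$, and you handle the two genuine obstacles correctly: the mass of $\mu_n$ outside the essential spectrum is killed by the portmanteau bound $\mu_n(\Rr\setminus E^\circ)\to 0$ together with boundedness of $f$ on $\supp(\mu)$, and the Riemann sandwich by continuous functions is run against $\nu$ rather than Lebesgue measure, with the endpoint collar (where the arcsine density blows up) absorbed by integrability and non-atomicity. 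One cosmetic slip: you wrote ``off-diagonal entries $a_n\to b$, diagonal entries $b_n\to a$'', swapping the paper's convention ($a_n$ diagonal $\to a$, $b_n$ off-diagonal $\to b$); the rest of your computation uses the correct assignment, so this is only a typo. Do make sure the continuous majorant and minorant satisfy $g_-\le f\le g_+$ pointwise everywhere on $E$, not just Lebesgue-a.e., since $\mu_n$ may carry singular mass inside $E$; your construction from Riemann step functions does guarantee this.
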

The polynomials $p_n$ of degree $n \in \Nn_0$ are the orthonormal polynomial with respect to the measure $\mu$. They satisfy a three-term recurrence relation  
$$x p_n(x) = b_{n+1} p_{n+1}(x) + a_n p_n(x) + b_n p_{n-1}$$ with coefficients $a_n \in \Rr$ and $b_{n+1}>0$, $n \in \Nn_0$. The \emph{Nevai class $M(a,b)$} in Theorem \ref{thm-Nevai} is
the set of all measures $\mu$ such that $\lim_{n \to \infty} a_n = a$ and $\lim_{n \to \infty} b_n = b$. In \cite{Simon2004}, also a converse statement is shown, namely that 
if $\supp(\mu)$ is bounded and \eqref{eq:limitNevai} holds true for all continuous functions $f$ on $\supp(\mu)$ then $\mu$ is in the Nevai class $M(a,b)$. Further,
in \cite{Simon2004} the existence of this statement is extended to a larger class of measures in which the recurrence coefficients satisfy the weaker condition
$\lim_{n \to \infty} b_{2n} = b'$, $\lim_{n \to \infty} b_{2n+1} = b''$ and $\lim_{n \to \infty} a_n = a$. 
A similar weak limit result holds true for orthogonal polynomials on the unit circle. The corresponding statement in this setting is Khrushchev's Theorem: the weak convergence 
of $|p_n(e^{i \theta})|^2 \Dx{\mu}$ towards the uniform measure $ \frac{\Dx{\theta}}{2\pi}$ on the unit circle is equivalent to the fact that the Verblunsky coefficients satisfy
the M{\'a}t{\'e}-Nevai condition \cite[Theorem 9.3.1]{Simon2005}. 

If the assumptions of Theorem \ref{thm-Nevai} are satisfied, two further related weak limits can be derived
from \eqref{eq:limitNevai} (see \cite[Section 5, Lemma 1 and Theorem 3]{Nevai}), namely
\begin{align} \label{eq:CDlimitNevai} 
\lim_{n \to \infty} \int_\Rr f(x)  \frac{1}{n+1} \sum_{k = 0}^n p_k^2(x) \Dx{\mu} &= \frac{1}{\pi} \int_{a-2b}^{a+2b}  \frac{f(x) \, \Dx{x}}{\sqrt{4 b^2-(x-a)^2}}, \\
\lim_{n \to \infty} \frac{1}{n+1} \sum_{k = 1}^{n+1} f(x_{n+1,k}) &=  \frac{1}{\pi} \int_{a-2b}^{a+2b}  \frac{f(x) \, \Dx{x}}{\sqrt{4 b^2-(x-a)^2}}. \label{eq:discretelimitNevai}
\end{align}
The limit \eqref{eq:CDlimitNevai} is usually referred to as weak limit of the Christoffel-Darboux kernel (or shortly CD kernel) $K(x,x) = \sum_{k=0}^n p_k^2(x)$. 
The values $x_{n+1,1} < x_{n+1,2}< \cdots < x_{n+1,n+1}$ denote the $n+1$ roots of the polynomial $p_{n+1}$ and the 
weak limit in \eqref{eq:discretelimitNevai} therefore gives the asymptotic distribution of the roots of the orthogonal polynomials $p_{n+1}$. 
The two sequences of measures in \eqref{eq:CDlimitNevai} and \eqref{eq:discretelimitNevai} are intimately related. In 
\cite{Simon2009} it is shown that the convergence of a subsequence of one of the two sequences implies the convergence of the corresponding other.
Variants and generalizations of the limits \eqref{eq:limitNevai}, \eqref{eq:CDlimitNevai} and \eqref{eq:discretelimitNevai} have been intensively studied for 
different families of orthogonal polynomials,
among others, for orthogonal polynomials on the unit circle and for classes of measures with asymptotically periodic recurrence coefficients. A general overview can be found 
in the book \cite{Simon2011}. Specific variants are, for instance, discussed in \cite{Christiansen2009,DamanikKillipSimon2014,MateNevaiTotik1987,MhaskarSaff1990,Nevai1979,VanAssche}. 

In this paper we want to derive and investigate analogs of the weak limits \eqref{eq:limitNevai}, \eqref{eq:CDlimitNevai} and \eqref{eq:discretelimitNevai} for weighted means of differing orthogonality measures. The motivation to study such mean weak limits originates in two works \cite{Erb2013} and \cite{ErbMathias2015} in which a Landau-Pollak-Slepian type
space-frequency analysis was studied for spaces of orthogonal polynomials. In the one-dimensional case given in \cite{Erb2013}, the roots of orthogonal polynomials
were used to describe the spatial position of localized basis functions. The corresponding asymptotic distribution of the roots is given by the arcsine distribution \eqref{eq:discretelimitNevai}. 
In the case of the unit sphere a similar description was derived in \cite{ErbMathias2015}. This description however included ultraspherical and co-recursive 
ultraspherical polynomials with a differing parameter. In this setting, 
the mean asymptotic distribution of the roots of the polynomials turned out to be a uniform distribution on $[-1,1]$. 
Compared to the arcsine distribution given in \eqref{eq:discretelimitNevai} this result came as a surprise. Our aim here is to obtain a better understanding of the differences
in the two settings. 

Our first goal is to introduce a new Nevai class for families of orthogonality measures that guarantees the existence of weak limits for means of measures. 
For this we will shortly recapitulate some facts about regular summation methods. The extension of Theorem \ref{thm-Nevai} 
to weighted means of orthogonal polynomials is formulated in Theorem \ref{theorem-Nevaiweaklimit}. The analogs of the formulas \eqref{eq:CDlimitNevai} 
for the Christoffel-Darboux kernel and \eqref{eq:discretelimitNevai} for the mean asymptotic distribution of the roots
are provided in Theorem \ref{theorem-NevaiweaklimitCD} and Theorem \ref{theorem-Nevaiweaklimitpointmeasure}, respectively.
Finally, we will investigate some particular summation methods and the corresponding mean Nevai classes 
in which the equilibrium measure for the weak limit is precisely the orthogonality measure of the ultraspherical polynomials. The so
obtained limit formulas can be regarded as asymptotic weak addition formulas for the underlying Nevai class.

\section{Preliminaries}

We consider a family of non-negative measures $\muk$, $k \in \Nn_0$ on $\Rr$ supported on a bounded subinterval of $\Rr$. By 
$p_l^{(k)}(x)$, we denote the corresponding family of orthogonal polynomials of degree $l$. The polynomials $p_l^{(k)}(x)$ are normalized 
such that the leading coefficient is positive and that $p_l^{(k)}(x)$ are orthonormal on $\Rr$ with respect to the inner product
	\begin{equation*} 
	\label{eq:innerproduct-ultra}
	\langle f,g \rangle^{(k)} := \int_{\Rr} f(x) \overline{g( x)} \Dx{\muk}.
	\end{equation*}
It is well known that the polynomials $p_l^{(k)}$ are an orthonormal basis of the Hilbert space $(L^2(\Rr,\muk), \langle \cdot, \cdot \rangle^{(k)})$.
For a general overview on orthogonal polynomials and a multitude of their properties, we refer to the monographs \cite{Chihara,Gautschi,Ismail,Szegoe}. For this article, the three-term recurrence relation
of the orthogonal polynomials is of major importance: setting $p_{-1}(x) = 0$ and $p_0 = \frac{1}{\muk(\Rr)}$, we have the relation
\begin{equation} \label{eq:threetermrecurrencegeneral}
x p_l^{(k)}(x) = b_{l+1}^{(k)} p_{l+1}^{(k)}(x) + a_l^{(k)} p_l^{(k)}(x) + b_l^{(k)} p_{l-1}^{(k)}
\end{equation}
for $l \in \Nn_0$ with coefficients $a_l^{(k)} \in \Rr$ and $b_l^{(k)} > 0$. Introducing the Jacobi matrices
	\begin{equation*}
	\label{eq:jacobi}
		\Jj_l^{(k)} := 	
		\begin{pmatrix}
			a_0^{(k)} 	& b_{1}^{(k)}	 & 0      	& 0 	    	& \cdots  	& 0 \\
			b_{1}^{(k)} 	& a_1^{(k)} 	 & b_{2}^{(k)}  & 0 	    	& \cdots  	& 0 \\
			0 		& b_{2}^{(k)}    & a_2^{(k)}	& b_{3}^{(k)}   & \ddots  	& \vdots \\
			\vdots 		& \ddots 	 & \ddots 	& \ddots  	& \ddots	& 0 \\
			0 		& \cdots 	 & 0      	& b_{l-1}^{(k)} & a_{l-1}^{(k)} & b_{l}^{(k)} \\
			0 		& \cdots 	 & \cdots 	& 0 	    	& b_{l}^{(k)}   & a_l^{(k)}
		\end{pmatrix},
	\end{equation*}
we further have the representation 
\begin{align*}
	p_{l+1}^{(k)}(x) &= \frac{1}{\muk(\Rr)}\frac{1}{ b_{1}^{(k)} \cdots b_{l+1}^{(k)}}\det \left(x \mathbf{1}_{l+1} - \Jj_{l}^{(k)} \right). \label{eq:expressionbyjacobianassociated}
\end{align*}
 From this representation it is obvious that the roots $x_{l+1,j}^{(k)}$, $1 \leq j \leq l+1$, of $p_{l+1}^{(k)}(x)$ correspond to the $l+1$ eigenvalues of the 
 symmetric matrix $\Jj_l^{(k)}$. To simplify our calculations, we additionally set $a_{-l}^{(k)} = 0$ if $l \in \Nn$ and $b_{-l}^{(k)} = 0$ for $l \in \Nn_0$.

\section{Weak limits for weighted sums of orthogonal polynomials}

\begin{figure} \centering
\begin{tikzpicture}[baseline=(A.center)]
  \tikzset{BarreStyle/.style =   {opacity=.15,line width=2 mm,line cap=round,color=#1}}
  \tikzset{SignePlus/.style =   {left,,opacity=1,ellipse,fill=#1!20}}
% the matrices
\matrix (A) [matrix of math nodes,,,column sep=0.1cm, row sep=0.05cm] 
{ {|p_0^{(0)}|^2} \Dx{\mu^{(0)}}	& {|p_1^{(0)}|^2} \Dx{\mu^{(0)}}	& {|p_2^{(0)}|^2} \Dx{\mu^{(0)}}	& {|p_3^{(0)}|^2} \Dx{\mu^{(0)}}	& {|p_4^{(0)}|^2} \Dx{\mu^{(0)}}		& \cdots & \\
  {|p_0^{(1)}|^2} \Dx{\mu^{(1)}}	& {|p_1^{(1)}|^2} \Dx{\mu^{(1)}}	& {|p_2^{(1)}|^2} \Dx{\mu^{(1)}}	& {|p_3^{(1)}|^2} \Dx{\mu^{(1)}}	& \ddots	&  & \\
  {|p_0^{(2)}|^2} \Dx{\mu^{(2)}}	& {|p_1^{(2)}|^2} \Dx{\mu^{(2)}}	& {|p_2^{(2)}|^2} \Dx{\mu^{(2)}} 	& \ddots	& 	&  & \\  
  {|p_0^{(3)}|^2} \Dx{\mu^{(3)}}	& {|p_1^{(3)}|^2} \Dx{\mu^{(3)}}	& \ddots	& 	&        &  & \\
  {|p_0^{(4)}|^2} \Dx{\mu^{(4)}}	& \ddots	& 	&  	&        &  & \\
  \vdots		& 		& 			& 			       		&  & \\
};
 \draw [BarreStyle=blue,decorate,decoration=snake]  (A-1-1.south west) node[SignePlus=blue] {$n=0$} to (A-1-1.north east);
 \draw [BarreStyle=blue,decorate,decoration=snake]  (A-2-1.south west) node[SignePlus=blue] {$n=1$} to (A-1-2.north east);
 \draw [BarreStyle=blue,decorate,decoration=snake]  (A-3-1.south west) node[SignePlus=blue] {$n=2$} to (A-1-3.north east);
 \draw [BarreStyle=blue,decorate,decoration=snake]  (A-4-1.south west) node[SignePlus=blue] {$n=3$} to (A-1-4.north east);
 \draw [BarreStyle=red,decorate,decoration=snake]  (A-5-1.south west) node[SignePlus=red] {$n=4$} to (A-1-5.north east);
\end{tikzpicture}
\caption{Graphical illustration of the summation methods. Summing over the red curve gives the measure $ \bar{\mu}_4$ defined in \eqref{eq:summethod1}, summing over
all blue curves generates the measure $\lambda_3$ introduced in \eqref{eq:summethod2}. The sum of all red and blue elements gives $\lambda_4$.}
\label{fig:1}
\end{figure}

For a sequence $y = (y_k)_{k \in \Nn_0}$, we consider summation methods $S = (S_n)_{n \in \Nn_0}$ given by
\[ S_n ( y ) = \sum_{k=0}^n \sigma_{n,k} y_k.\]

\begin{assumption}[Regularity of $S$] \label{assumption1}
We assume that the weights $\sigma_{n,k}$ of the summation method $S$ satisfy the following three conditions:
\begin{enumerate}[(i)]
 \item $\sigma_{n,k} \geq 0$, for all $0 \leq k \leq n$, $n \in \Nn_0$,  
 \item $\sum_{k=0}^{n} \sigma_{n,k} = 1$,
 \item $\lim_{n \to \infty} \sigma_{n,k} = 0$ for $k \in \Nn_0$.
\end{enumerate}
These three conditions imply that, according to the Silverman-Toeplitz theorem, for every convergent sequence $y$ the 
sequence $(S_n y)_{n \in \Nn_0}$ converges to the same limit. In this work, we call the summation method $S$ regular if the three conditions (i), (ii) and (iii) are
satisfied. 
\end{assumption}

Note that in the literature the non-negativity of the summation weights is usually not demanded for regularity. If negative weights $\sigma_{n,k}$ are allowed, 
Assumption \ref{assumption1} on regularity can be weakened by postulating (ii), (iii) together with $\sum_{k=0}^{n} |\sigma_{n,k}| \leq M$ for some positive constant $M$. 
In this work, we will only consider non-negative weights and therefore use the conditions in Assumption \ref{assumption1}.
For a broader overview to summation methods we refer to \cite{BoosCass}. 

Based on a regular summation method $S$, we consider now for $n \geq 0$ the following mean measures. The single measures involved in the summation are illustrated in Figure \ref{fig:1}.
\begin{equation} \label{eq:summethod1}
 \Dx{\bar{\mu}_n} = \sum_{k=0}^n \sigma_{n,k} \left( p_{k}^{(n-k)}(x)\right)^2 \Dx{\munk}.
\end{equation}

\begin{definition} \label{def:meannevai}
For a regular summation method $S$, we say that the family of measures $\muk$, $k \in \Nn_0$, is in the mean Nevai class $M^{(S)}(\Sigma_{l_a,l_b})$
if the following three conditions are satisfied for the coefficients $a_n^{(k)}$, $b_n^{(k)}$ in the three-term recurrence relation \eqref{eq:threetermrecurrencegeneral}:
\begin{enumerate}[(i)]
 \item $\ds \sup_{n,k \in \Nn_0} |a_n^{(k)}| = A < \infty$, $\ds \sup_{n,k \in \Nn_0} |b_n^{(k)}| = B < \infty$.
 \item $\ds \lim_{n \to \infty} \sum_{k = 0}^n \sigma_{n,k} |a_{k+l}^{(n-k)} - a_{k}^{(n-k)}| = 0$, $\ds \lim_{n \to \infty} \sum_{k = 0}^n \sigma_{n,k} |b_{k+l}^{(n-k)} - b_{k}^{(n-k)}| = 0$, for
 all $l \in \Nn$.
 \item $\ds \lim_{n \to \infty} \sum_{k = 0}^n \sigma_{n,k} (a_{k}^{(n-k)})^{l_a} (b_{k}^{(n-k)})^{l_b} = \Sigma_{l_a,l_b} < \infty$ for all $l_a,l_b \in \Nn_0$. 
\end{enumerate}
\end{definition}

\begin{theorem} \label{theorem-Nevaiweaklimit}
Suppose that the family $\muk$, $k \in \Nn_0$, is in the mean Nevai class $M^{(S)}(\Sigma_{l_a,l_b})$. Then, 
the sequence $\bar{\mu}_n$ converges weakly to an equilibrium measure $\mus$ and for every continuous $f$ we have
 \[ \lim_{n \to \infty} \sum_{k=0}^n \sigma_{n,k}\int_{\Rr} f(x) (p_{k}^{(n-k)}(x))^2 \Dx{\mu^{(n-k)}} = \int_{\Rr} f(x) d \mus.\]
 The equilibrium measure $\mus$ is determined by the numbers $\Sigma_{l_a,l_b}$, $l_a\in \Nn_0$, $l_b \in 2 \Nn_0$.  
\end{theorem}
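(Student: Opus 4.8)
The plan is to prove weak convergence via the moment method: it suffices to show that for every $j \in \Nn_0$ the moments $\int_\Rr x^j \,\Dx{\bar\mu_n}$ converge to a finite limit $m_j$, since condition (i) of Definition \ref{def:meannevai} gives a uniform bound $\sup_n \supp(\bar\mu_n) \subseteq [-(A+2B),A+2B]$, so the sequence $\bar\mu_n$ is tight and supported in a fixed compact set; convergence of all moments then forces weak convergence to the unique measure $\mus$ with those moments (and approximating a continuous $f$ by polynomials on the compact set upgrades moment convergence to the displayed limit for all continuous $f$). So the real content is to compute $\lim_{n\to\infty}\int_\Rr x^j \,\Dx{\bar\mu_n}$ and to show the limit depends only on the $\Sigma_{l_a,l_b}$ with $l_b$ even.

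First I would write, for fixed $n$ and $k$,
\[ \int_\Rr x^j \bigl(p_k^{(n-k)}(x)\bigr)^2 \Dx{\mu^{(n-k)}} = \langle x^j p_k^{(n-k)}, p_k^{(n-k)}\rangle^{(n-k)} = \bigl( (\Jj^{(n-k)})^j \bigr)_{k,k}, \]
i.e. the $(k,k)$ diagonal entry of the $j$-th power of the (semi-infinite) Jacobi matrix with entries $a_l^{(n-k)}$, $b_l^{(n-k)}$. Expanding the matrix power, this diagonal entry is a finite sum over lattice paths of length $j$ from $k$ to $k$ of products of the $a$'s and $b$'s visited, each such product being a monomial in the coefficients $a_{k+i}^{(n-k)}$, $b_{k+i}^{(n-k)}$ with the shifts $i$ ranging over a bounded set (depending only on $j$). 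Using condition (ii) — the Nevai-type near-constancy of the coefficients along the diagonal in the Cesàro-type average weighted by $\sigma_{n,k}$ — together with the uniform bound (i), I would replace every factor $a_{k+i}^{(n-k)}$ by $a_k^{(n-k)}$ and every $b_{k+i}^{(n-k)}$ by $b_k^{(n-k)}$ at the cost of an error that vanishes after summing against $\sigma_{n,k}$ and letting $n\to\infty$ (a telescoping/Abel argument turning products of nearly-equal factors into a single power, controlled by (i) and (ii)). After this replacement, $\sum_{k=0}^n \sigma_{n,k} \bigl((\Jj^{(n-k)})^j\bigr)_{k,k}$ becomes $\sum_{k=0}^n \sigma_{n,k} \, N_j\bigl(a_k^{(n-k)}, b_k^{(n-k)}\bigr)$, where $N_j$ is the fixed "diagonal lattice-path polynomial": $N_j(a,b) = \sum_{\text{closed paths of length }j} a^{\#(\text{horizontal steps})} b^{\#(\text{vertical steps})}$, and one checks combinatorially that every closed path uses an even number of vertical steps, so $N_j$ is a polynomial in $a$ and $b^2$ only. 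Condition (iii) then says $\sum_{k}\sigma_{n,k} N_j(a_k^{(n-k)},b_k^{(n-k)}) \to N_j$ evaluated "at the $\Sigma$'s", i.e. the limit $m_j$ is the corresponding linear combination of the $\Sigma_{l_a,l_b}$ with $l_b \in 2\Nn_0$. This both identifies the limiting moments and shows they depend only on those $\Sigma_{l_a,l_b}$, proving the final sentence of the theorem.

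The main obstacle is the path-replacement step: making rigorous that replacing the shifted coefficients by the unshifted ones produces an error that is controlled by (i) and (ii) after averaging. The clean way is induction on $j$: write $\bigl((\Jj^{(n-k)})^j\bigr)_{k,k}$ using the three-term recurrence to peel off one step, expressing it in terms of $\bigl((\Jj^{(n-k)})^{j-1}\bigr)_{k',k'}$-type quantities at neighbouring indices $k' \in \{k-1,k,k+1\}$ times a single coefficient $a_k^{(n-k)}$ or $b_k^{(n-k)}$ (or $b_{k+1}^{(n-k)}$), then use (ii) to shift the index of that diagonal block back to $k$ inside the $\sigma_{n,k}$-average — here (iii) at lower orders supplies the boundedness needed to absorb the (ii)-errors, since the diagonal blocks are bounded by $(A+2B)^{j-1}$ uniformly by (i). One must be slightly careful with the boundary terms near $k=0$, but these are handled by the conventions $a_{-l}^{(k)} = b_{-l}^{(k)} = 0$ already fixed in the Preliminaries, and they contribute only finitely many indices $k$, each with weight $\sigma_{n,k}\to 0$ by regularity (iii) of Assumption \ref{assumption1}. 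Finally, uniqueness of $\mus$ is automatic because it is compactly supported and hence determined by its moments; the name "equilibrium measure" is justified separately (it is the measure whose Jacobi parameters are the constants read off from the $\Sigma$'s, as will be made precise in the sequel).
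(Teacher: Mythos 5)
Your proposal is correct and follows essentially the same route as the paper: weak convergence via moments on the fixed compact support $[-A-2B,A+2B]$, expansion of $\int x^j(p_k^{(n-k)})^2\Dx{\mu^{(n-k)}}$ as a sum over closed lattice paths (the paper phrases this via the path weights $W^{(k)}(T_m\rho)$ of \cite{Simon2004}, you via diagonal entries of powers of the Jacobi matrix, which is the same expansion), replacement of the shifted coefficients by $a_k^{(n-k)}, b_k^{(n-k)}$ using conditions (i) and (ii), the parity observation that closed paths have an even number of vertical steps, and finally condition (iii) to identify the limit as a sum of the $\Sigma_{l_a,l_b}$. Your induction-on-$j$ scheme for the replacement step and the remark on boundary indices near $k=0$ are elaborations of details the paper leaves implicit, but the argument is the same.
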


In order to prove Theorem \ref{theorem-Nevaiweaklimit}, we can follow similar argumentation lines as given in \cite[Section 4.2, Theorem 14]{Nevai} or 
in \cite[Theorem 3.1]{Simon2004}. In the following, we will stay closer to the notion used in \cite{Simon2004} and introduce some additional terminology.  

A path $\rho$ of length $l$ is a sequence $(\rho_j)_{j=0}^{l} \in \Zz^{l+1}$ of $l+1$ integers such that $|\rho_j - \rho_{j-1}| \leq 1$. For a path $\rho$, we define the weights
\[ W^{(k)}(\rho) = \prod_{j=0}^{l-1} w^{(k)}(\rho_j,\rho_{j+1})\]
where
\[ w^{(k)}(\rho_j,\rho_{j+1}) = \left\{ \begin{array}{ll}
                                       a_{m}^{(k)} & \text{if} \; \rho_{j+1} = \rho_{j} = m,   \\ 
                                       b_{m+1}^{(k)} & \text{if} \; \rho_{j+1} = \rho_{j}+1 = m + 1, \\ 
                                       b_{m}^{(k)} & \text{if} \;\rho_{j+1} = \rho_{j}-1 = m-1.
                                       \end{array} \right.\]
Remind that for negative $m$ we set the coefficients $a_{m}^{(k)}$ and $b_{m}^{(k)}$ equal to zero, as well as the coefficient $b_{0}^{(k)}$. 
For a path $\rho$ and $m\in \Zz$, we further define the shifted path $T_m \rho$ by
\[T_m \rho = (\rho_j + m)_{j=0}^{l}.\]

By using the three-term recurrence relation \eqref{eq:threetermrecurrencegeneral} it is straightforward to see that 
\begin{equation} \label{eq:helpmeBarry1} \int_\Rr x^l \left( p_{m}^{(k)}(x)\right)^2 \Dx{\muk} = \langle x^j p_{m}^{(k)},p_{m}^{(k)} \rangle^{(k)} = \sum_{\rho \in Q_{l}} W^{(k)}(T_m \rho) \end{equation}
holds, where $Q_{l}$ denotes the set of all paths of length $l$ with $\rho_0 = \rho_l = 0$. The detailed elaboration for the derivation of \eqref{eq:helpmeBarry1} is given
in \cite[Proposition 3.3]{Simon2004}.
The identity \eqref{eq:helpmeBarry1} immediately implies the formula
\begin{equation} \label{eq-help1} \int_{\Rr} x^l \Dx{\bar{\mu}_n} = \sum_{\rho \in Q_{l}} \sum_{k=0}^n \sigma_{n,k} W^{(n-k)}(T_{k} \rho) \end{equation}
for the mean measures $\muk$. It allows us to prove Theorem \ref{theorem-Nevaiweaklimit}.

\begin{proof}
 The assumption $(i)$ in Theorem \ref{theorem-Nevaiweaklimit} implies that the measures $\bar{\mu}_n$, $n \in \Nn_0$, are all compactly supported in $[-2 B-A,A+2B]$. 
 Thus, weak convergence of the measures $\bar{\mu}_n$ is equivalent to the convergence of the moments
 \[\int_{\Rr} x^l \Dx{\bar{\mu}_n}.\]
 By \eqref{eq-help1}, we only have to show that for every path $\rho \in Q_l$ the sums 
\[ \sum_{k=0}^n \sigma_{n,k} W^{(n-k)}(T_{k} \rho)\]
are converging. For this, we refine our look on the paths $\rho \in Q_l$. For $\rho \in Q_l$ with length $l$ we set $l_a = \# \{j:\; \rho_{j+1} = \rho_{j} \}$ and 
$l_b = \# \{j:\; \rho_{j+1} \neq \rho_{j} \}$ such that $l = l_a + l_b$ holds. Then, the sum above can be written as
\[ \sum_{k=0}^n \sigma_{n,k} \prod_{j=0}^{l_a-1} a_{k+r_{j,a}}^{(n-k)}\prod_{j=0}^{l_b-1} b_{k+r_{j,b}}^{(n-k)}, \]
where $r_{j,a}$ and $r_{j,b}$ are integers between $-l/2$ and $l/2$ that depend on the chosen path $\rho \in Q_l$. Further, since $\rho \in Q_l$, $l_b$ must be an even integer. Because
of the assumptions (i), (ii) and (iii) in the Definition \ref{def:meannevai} of the Nevai class $M^{(S)}(\Sigma_{l_a,l_b})$, we obtain as $n \to \infty$ the identity 
\[ \lim_{n \to \infty} \sum_{k=0}^n \sigma_{n,k} \prod_{j=0}^{l_a-1} a_{k+r_{j,1}}^{(n-k)}\prod_{j=0}^{l_b-1} b_{k+r_{j,2}}^{(n-k)}
 = \lim_{n \to \infty} \sum_{k=0}^n \sigma_{n,k} (a_{k}^{(n-k)})^{l_a} (b_{k}^{(n-k)})^{l_b} = \Sigma_{l_a,l_b}. \]
In this formula, the condition (iii) guarantees the existence of the limit in the second equality, whereas the conditions (i) and (ii) guarantee that the first equality holds true.  
Therefore, starting from the identity \eqref{eq-help1} we obtain
\begin{equation*} \lim_{n\to \infty} \int_{\Rr} x^l \Dx{\bar{\mu}_n} = \sum_{\rho \in Q_{l}} \Sigma_{l_a,l_b},  \end{equation*} 
and thus the statement of the theorem. In particular, the last identity implies that the equilibrium measure $\mus$ solely depends on the values $\Sigma_{l_a,l_b}$, $l_a \in \Nn$, $l_b \in 2 \Nn_0$. 
\end{proof}

\begin{remark}
The assumption $(i)$ in Definition \ref{def:meannevai} implies that the measures $\muk$, the means $\bar{\mu}_n$ and the equilibrium measure $\mus$ are all compactly supported in $[-2 B-A,A+2B]$.
\end{remark}

We give a first example of such a mean weak limit. In view of Theorem \ref{thm-Nevai} for measures in the Nevai class $M(a,b)$ the outcome is not yet surprising.  

\begin{definition} \label{def:uniformnevai}
We say that the family of measures $\muk$, $k\in \Nn_0$, is in the uniform Nevai class $M^{(U)}(a,b)$ if the  
coefficients $a_n^{(k)}$, $b_n^{(k)}$ in the three-term recurrence relation \eqref{eq:threetermrecurrencegeneral} are uniformly in 
the same Nevai class $M(a,b)$, i.e.
\begin{equation} \label{eq:uniformnevai} \lim_{n \to \infty} \sup_{k \in \Nn} |a_n^{(k)} - a| = 0, \quad \lim_{n \to \infty} \sup_{k \in \Nn} |b_n^{(k)} - b| = 0. \end{equation}
\end{definition}

We denote the characteristic function of an interval $I \subset \Rr$ by $\chi_I$. 

\begin{corollary} \label{cor:uniformnevai}
Assume that the family $\muk$, $k\in \Nn_0$, is in the uniform Nevai class $M^{(U)}(a,b)$. Then, for every regular summation method $S$
the family $\muk$ is in the mean Nevai class 
$M^{(S)}(\Sigma_{l_a,l_b})$ with the values $\Sigma_{l_a,l_b}$ given by $\Sigma_{l_a,l_b} = a^{l_a} b^{l_b}$. In this case, the equilibrium measure $\mus$ is explicitly given by
\[ \Dx{\mus} = \frac{1}{\pi} \frac{\chi_{[a-2b,a+2b]}(x)}{\sqrt{4b^2 - (x-a)^2}}\Dx{x}. \]
In particular, the measure $\mus$ depends only on the limits $a$ and $b$ and is independent of the summation method $S$. 
\end{corollary}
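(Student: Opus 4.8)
The plan is to verify directly that membership in the uniform Nevai class $M^{(U)}(a,b)$ forces, for an \emph{arbitrary} regular summation method $S$, the three defining conditions of the mean Nevai class $M^{(S)}(\Sigma_{l_a,l_b})$ from Definition \ref{def:meannevai} with $\Sigma_{l_a,l_b}=a^{l_a}b^{l_b}$, and then to read off the equilibrium measure from Theorem \ref{theorem-Nevaiweaklimit} by computing its moments. Condition (i) is immediate: writing $c_n:=\sup_{k}|a_n^{(k)}-a|$, the hypothesis \eqref{eq:uniformnevai} says $c_n\to 0$, so $(c_n)$ is bounded and hence $\sup_{n,k}|a_n^{(k)}|<\infty$; likewise for $b$. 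In particular all the coefficients $a_n^{(k)},b_n^{(k)}$ lie in a fixed compact box $[-A,A]\times[-B,B]$.

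For conditions (ii) and (iii) the key is a threshold-splitting argument that converts uniform-in-$k$ convergence into convergence of the $\sigma_{n,k}$-weighted sums. Put $\eta_m:=\sup_{j\ge m}\sup_{k}|a_j^{(k)}-a|$ and its $b$-analogue; both tend to $0$ as $m\to\infty$ by \eqref{eq:uniformnevai}. Fix $l,l_a,l_b$ and $\eps>0$. Since $(s,t)\mapsto s^{l_a}t^{l_b}$ is uniformly continuous on $[-A,A]\times[-B,B]$, there is an $m$ so large that for every $k\ge m$ the term $(a_k^{(n-k)})^{l_a}(b_k^{(n-k)})^{l_b}$ lies within $\eps/2$ of $a^{l_a}b^{l_b}$, and (for condition (ii)) $|a_{k+l}^{(n-k)}-a_k^{(n-k)}|\le 2\eta_m$ and similarly for $b$, since both degrees $k$ and $k+l$ exceed $m$. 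The finitely many terms with $k<m$ are bounded by condition (i), so their weighted contribution is at most a constant times $\sum_{k=0}^{m-1}\sigma_{n,k}$, which tends to $0$ as $n\to\infty$ by regularity (iii), while the remaining weights sum to at most $1$ by regularity (ii). Hence for $n$ large the full weighted sum is within $\eps$ of its target ($0$ for (ii), $a^{l_a}b^{l_b}$ for (iii)). This shows $\muk$ belongs to $M^{(S)}(\Sigma_{l_a,l_b})$ with $\Sigma_{l_a,l_b}=a^{l_a}b^{l_b}$, so Theorem \ref{theorem-Nevaiweaklimit} applies and $\bar\mu_n$ converges weakly to an equilibrium measure $\mus$ determined by these numbers.

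It remains to identify $\mus$. From the proof of Theorem \ref{theorem-Nevaiweaklimit} one has $\lim_{n\to\infty}\int_\Rr x^l\,\Dx{\bar\mu_n}=\sum_{\rho\in Q_l}\Sigma_{l_a,l_b}=\sum_{\rho\in Q_l}a^{l_a}b^{l_b}$, where for each path $\rho\in Q_l$ the exponents $l_a,l_b$ count its horizontal and its $\pm1$ steps. I would then recognize $\sum_{\rho\in Q_l}a^{l_a}b^{l_b}$ as the $l$-th moment of $\frac{1}{\pi}\chi_{[a-2b,a+2b]}(x)(4b^2-(x-a)^2)^{-1/2}\,\Dx{x}$. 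One clean route: let $(p_m)$ be the orthonormal polynomials of a measure $\nu$ with constant recurrence coefficients $a_j\equiv a$, $b_j\equiv b$, which is trivially in the Nevai class $M(a,b)$; identity \eqref{eq:helpmeBarry1} then expresses $\int_\Rr x^l p_m^2\,\Dx{\nu}$ as a sum over $\rho\in Q_l$ of products of recurrence coefficients along the shifted path $T_m\rho$, and once $m$ exceeds the bounded range of the paths in $Q_l$ every such product equals $a^{l_a}b^{l_b}$, independently of $m$; letting $m\to\infty$ and invoking \eqref{eq:limitNevai} identifies the sum with the asserted moment. Equivalently, $\sum_{\rho\in Q_l}a^{l_a}b^{l_b}$ equals $\int_{-\pi}^{\pi}(a+2b\cos\theta)^l\,\frac{\Dx{\theta}}{2\pi}$, the $l$-th moment of the push-forward of normalized Lebesgue measure on $[-\pi,\pi]$ under $\theta\mapsto a+2b\cos\theta$, which is precisely this arcsine-type density. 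Since every $\bar\mu_n$ is supported in the fixed compact interval $[-2B-A,A+2B]$ and a compactly supported measure is determined by its moments, convergence of all moments yields $\bar\mu_n\to\mus$ weakly with $\mus$ equal to the stated density; and since $\Sigma_{l_a,l_b}=a^{l_a}b^{l_b}$ carries no trace of $S$, the limit is the same for every regular $S$.

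I expect the only genuinely delicate point to be the threshold-splitting in the second paragraph: one must arrange that the small-index part is annihilated by $\sigma_{n,k}\to0$ while the tail is controlled uniformly, which requires combining compactness (uniform continuity on $[-A,A]\times[-B,B]$), the normalization $\sum_{k}\sigma_{n,k}=1$, and the fact that the index shifts $k\mapsto k+l$ in condition (ii) leave one still above the threshold $m$. Everything after that is routine bookkeeping together with the already-quoted Theorem \ref{thm-Nevai}.
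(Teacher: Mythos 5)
Your proposal is correct and follows essentially the same route as the paper: verify the three conditions of Definition \ref{def:meannevai} (the paper declares this ``straightforward to check''; your threshold-splitting argument is exactly the missing bookkeeping), then exploit the fact that $\mus$ is determined by the numbers $\Sigma_{l_a,l_b}=a^{l_a}b^{l_b}$ alone to identify it on one convenient instance via Nevai's Theorem \ref{thm-Nevai} --- the paper specializes the summation method to $\sigma_{n,k}=\delta_{nk}$ and takes a constant family $\muk=\mu$, while your primary route specializes the recurrence coefficients to constants, which is the same idea. Your second, purely combinatorial identification of $\sum_{\rho\in Q_l}a^{l_a}b^{l_b}$ with $\int_{-\pi}^{\pi}(a+2b\cos\theta)^l\,\frac{\Dx{\theta}}{2\pi}$ is a correct self-contained alternative that the paper does not use, since it avoids invoking Theorem \ref{thm-Nevai} altogether.
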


\begin{proof}
It is straightforward to check that the uniformity of the limits in \eqref{eq:uniformnevai} together with the fact that the summation method $S$ is regular according to 
Assumption \ref{assumption1} implies all three conditions in Definition \ref{def:meannevai}, and also that
$\Sigma_{l_a,l_b} = a^{l_a} b^{l_b}$. By Theorem \ref{theorem-Nevaiweaklimit}, we therefore get an equilibrium measure $\mus$ which only 
depends on the limits $a$ and $b$ of the class. Further, to get the explicit form of the equilibrium measure $\mus$, it is sufficient to derive it for one particular summation method and 
for one particular family of measures in $M^{(U)}(a,b)$. For this, we consider a measure $\mu$ in the Nevai class $M(a,b)$ and set $\muk = \mu$ for all $k \in \Nn_0$. 
Then, the family $\muk$ is in $M^{(U)}(a,b)$. As a summation method $S$ we consider the identity scheme given by $\sigma_{n,k} = \delta_{n k}$, where $\delta_{n k}$ denotes 
the usual Kronecker delta. Obviously, this summation method is regular. For this construction, we explicitly get
\begin{align*}
\lim_{n \to \infty} \sum_{k=0}^n \sigma_{n,k}\int_{\Rr} f(x) (p_{k}^{(n-k)}(x))^2 \Dx{\mu^{(n-k)}} &= 
\lim_{n \to \infty} \int_{\Rr} f(x) (p_{n}(x))^2 \Dx{\mu} \\&= \frac{1}{\pi} \int_{a-2b}^{a+2b} \frac{f(x)}{\sqrt{4b^2 - (x-a)^2}}\Dx{x},
\end{align*}
where the last equality follows by Theorem \ref{thm-Nevai}. 
\end{proof}

\section{Weak limits for weighted sums of Christoffel-Darboux kernels}

We are now interested in weak limits related to the family $K_n^{(k)}(x,y)$, $k \in \Nn_0$, of Christoffel-Darboux kernels. They are defined as
\[K_n^{(k)}(x,y) = \sum_{l=0}^n p_l^{(k)}(x) p_l^{(k)}(y), \qquad n \in \Nn_0. \]
By considering averages of the measures $K_k^{(n-k)}(x,x) \Dx{\munk}$, $k = 0, \ldots, n$, the weights $\sigma_{n,k}$ of the summation method should in principal depend 
on $n$ and on the parameter $n-k$ of the measure $\munk$. Therefore, in this section we restrict ourselves to the following class of summation methods $S$.

\begin{definition}
Let $(\sigtwo_n)_{n\in \Nn_0}$ and $(\sigone_n)_{n\in\Nn_0}$ be two non-negative sequences. We call the summation method $S$ a N\"orlund method if the weights $\sigma_{n,k}$ 
are given by 
\begin{equation} \label{eq:decompositionsummation}
\sigma_{n,k} = \sigone_n \sigtwo_{n-k}, \quad n \in \Nn_0,\; 0 \leq k \leq n.
\end{equation}
We call a N\"orlund method $S$ regular if the three conditions of Assumption \ref{assumption1} are satisfied. 
\end{definition}

\begin{remark}
For a regular N\"orlund method $S$ with a fixed sequence 
$(\sigtwo_n)_{n\in \Nn_0}$ the sequence $(\sigone_n)_{n\in\Nn_0}$ is uniquely determined by
\begin{equation} \label{eq-tauunique} \sigone_n = \left(\sum_{k=0}^n \sigtwo_{n-k}\right)^{-1} > 0.\end{equation}
The fact that $S$ is regular implies further that $\sigtwo_0>0$. Therefore, we can always normalize a regular N\"orlund method such that $\sigtwo_{0} = 1$, $\tau_{0} = 1$ and 
$0 <\sigone_n \leq 1$ for $n \geq 1$. For more properties on N\"orlund methods we refer to \cite[Section 3.3]{BoosCass}. 
\end{remark}

For a regular N\"orlund method $S$ based on the sequences 
$(\sigtwo_n)_{n\in \Nn_0}$ and $\sigone_n = (\sum_{k=0}^n \sigtwo_{n-k})^{-1}$ we can define the normalization constant
\[ \sigN_n = \sigone_n \sum_{k=0}^n \frac{1}{\sigone_k} \geq 1.\]
This gives us the possibility to introduce a new summation method $T$ based on the positive weights
\begin{equation} \label{eq:secondnorlund} \tau_{n,k} = \frac{\sigone_n}{\sigN_n} \frac{1}{\sigone_{k}}.\end{equation}

\begin{lemma} \label{lemma-riesz}
If $S$ is a regular N\"orlund method determined by a given sequence $(\sigtwo_n)_{n\in \Nn_0}$ in \eqref{eq:decompositionsummation},
then the summation method $T$ given by \eqref{eq:secondnorlund} is also regular. 
\end{lemma}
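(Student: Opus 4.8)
The plan is to verify directly the three defining conditions of Assumption \ref{assumption1} for the weights $\tau_{n,k} = \frac{\sigone_n}{\sigN_n}\,\frac{1}{\sigone_k}$, exploiting the closed form \eqref{eq-tauunique} for $\sigone_n$ and the defining relation $\sigN_n = \sigone_n \sum_{k=0}^n \sigone_k^{-1}$. Non-negativity, condition (i), is immediate: by \eqref{eq-tauunique} every $\sigone_n$ is strictly positive, and $\sigN_n \geq 1$ by construction, so $\tau_{n,k} > 0$ for all $0 \leq k \leq n$. Normalization, condition (ii), is essentially a restatement of the definition of $\sigN_n$: summing over $k$ gives $\sum_{k=0}^n \tau_{n,k} = \frac{\sigone_n}{\sigN_n}\sum_{k=0}^n \sigone_k^{-1} = \frac{1}{\sigN_n}\bigl(\sigone_n \sum_{k=0}^n \sigone_k^{-1}\bigr) = 1$.

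The only condition requiring a genuine argument is (iii), and the key step is to rewrite $\tau_{n,k}$ in a form where the dependence on $n$ is transparent. Since $\sigN_n = \sigone_n \sum_{j=0}^n \sigone_j^{-1}$, one has $\frac{\sigone_n}{\sigN_n} = \bigl(\sum_{j=0}^n \sigone_j^{-1}\bigr)^{-1}$, hence for each fixed $k$
\[ \tau_{n,k} = \frac{1}{\sigone_k \sum_{j=0}^n \sigone_j^{-1}}. \]
Thus condition (iii) reduces entirely to showing that $\sum_{j=0}^n \sigone_j^{-1} \to \infty$ as $n \to \infty$. Using \eqref{eq-tauunique} again, $\sigone_j^{-1} = \sum_{i=0}^j \sigtwo_i$, which is a non-decreasing sequence bounded below by $\sigtwo_0$; regularity of the N\"orlund method $S$ forces $\sigtwo_0 > 0$, so $\sum_{j=0}^n \sigone_j^{-1} \geq (n+1)\,\sigtwo_0 \to \infty$, and therefore $\tau_{n,k} \to 0$ for every fixed $k$, which is (iii).

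In short, the main (and essentially only) obstacle is isolating the identity $\sigone_n/\sigN_n = \bigl(\sum_{j\le n}\sigone_j^{-1}\bigr)^{-1}$ and observing that the divergence of $\sum \sigone_j^{-1}$ — guaranteed by $\sigtwo_0>0$ — is exactly what drives condition (iii); conditions (i) and (ii) are one-line computations. I expect the argument to be completely elementary, and it reflects the classical fact that the Riesz (weighted-mean) method attached to a regular N\"orlund method is again regular.
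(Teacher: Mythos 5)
Your proof is correct, and it follows the same overall strategy as the paper: verify the three conditions of Assumption \ref{assumption1} directly, with (i) and (ii) immediate from the construction, so that only condition (iii) requires an argument. Where you differ is in how (iii) is handled. The paper notes that $(\sigone_n)$ is positive and monotonically decreasing, hence convergent, and then splits into two cases: if $\sigone_n \to 0$ the bound $\sigN_n \geq 1$ already gives $\sigone_n/\sigN_n \to 0$, while if $\sigone_n \to c > 0$ one argues that $\sigN_n \to \infty$. You instead collapse $\frac{\sigone_n}{\sigN_n}$ to $\bigl(\sum_{j=0}^n \sigone_j^{-1}\bigr)^{-1}$ and use the lower bound $\sigone_j^{-1} = \sum_{i=0}^{j}\sigtwo_i \geq \sigtwo_0 > 0$, so the denominator grows at least linearly in $n$; this eliminates the case distinction and uses nothing about $(\sigone_n)$ beyond positivity and the closed form \eqref{eq-tauunique}. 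Both arguments are elementary and complete; yours is marginally more direct, while the paper's makes visible the structural fact that $(\sigone_n)$ is monotone and convergent, which it reuses in the remark following the lemma.
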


\begin{proof}
The first two conditions (i) and (ii) of Assumption \ref{assumption1} are automatically satisfied by the construction of the summation method $T$.
Therefore, we only have to show (iii), i.e. that for fixed $k$ the positive sequence $\tau_{n,k} = \frac{\sigone_n}{\sigN_n} \frac{1}{\sigone_{k}}$ is converging to zero as $n \to \infty$. 
We know by \eqref{eq-tauunique} that the sequence $(\sigone_n)$ is positive and 
monotonically decreasing and, thus, convergent. If $(\sigone_n) \to 0$ then also $\frac{\sigone_n}{\sigN_n}$ converges to $0$ as $n \to \infty$. If 
$(\sigone_n) \to c > 0$, then $\sigN_n \to \infty$ as $n \to \infty$. Thus, also in this second case the sequence $\frac{\sigone_n}{\sigN_n}$ tends to zero. 
\end{proof}

\begin{remark}
The summation method $T$ is a Riesz summation method according to the definition given in \cite[Definition 3.2.2]{BoosCass}. Since $(\sigone_n)$ is monotonically decreasing,
the maximum of $\tau_{n,k}$ for fixed $n$ is attained at $k = n$. In particular, we have the inequalities $0 < \tau_{n,k} \leq \frac{1}{N_n}$. Since the summation method $T$ is regular, 
we therefore get the following estimates for the normalization constant $N_n$:
\[ 1 \leq N_n \leq n+1. \]
\end{remark}

We investigate now the following means related to the kernels $K_k^{(n-k)}(x,y)$:
\begin{equation}
  \Dx{\lambda_n} = \frac{1}{N_n} \sum_{k=0}^n \sigma_{n,k} K_k^{(n-k)}(x,x) \Dx{\munk}. \label{eq:summethod2}
\end{equation}

\begin{theorem} \label{theorem-NevaiweaklimitCD} Let $S$ be a regular N\"orlund method and let $\muk$, $k\in \Nn_0$, be in the mean Nevai class $M^{(S)}(\Sigma_{l_a,l_b})$. Then, 
the sequence $\lambda_n$ converges weakly to the equilibrium measure $\mus$ given in Theorem \ref{theorem-Nevaiweaklimit}, i.e. 
\begin{equation*} \label{equation-weaklimits} 
\limweak_{n \to \infty} \lambda_{n} = \limweak_{n \to \infty} \bar{\mu}_n = \mu_{\Sigma}.
\end{equation*} 
In particular, for every continuous function $f$ we have
 \begin{align*} 
 \lim_{n \to \infty} \frac{1}{N_n} \sum_{k=0}^n \sigma_{n,k} \int_{\Rr} f(x) K_{k}^{(n-k)}(x,x) \Dx{\mu^{(n-k)}} = \int_{\Rr} f(x) d \mus.
 \end{align*}
\end{theorem}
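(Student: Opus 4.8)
The plan is to reduce the statement about the means $\lambda_n$ of Christoffel--Darboux kernels to the already-established Theorem~\ref{theorem-Nevaiweaklimit} about the means $\bar\mu_n$. Since the class $M^{(S)}(\Sigma_{l_a,l_b})$ guarantees via assumption $(i)$ that every $p_l^{(k)}$ has recurrence coefficients bounded by $A$ and $B$, all measures $K_k^{(n-k)}(x,x)\Dx{\mu^{(n-k)}}$ and hence $\lambda_n$ are supported in $[-2B-A,A+2B]$; thus weak convergence of $\lambda_n$ is again equivalent to convergence of all moments $\int_\Rr x^l\,\Dx{\lambda_n}$, and it suffices to show these converge to $\int_\Rr x^l\,\Dx{\mu_\Sigma}=\sum_{\rho\in Q_l}\Sigma_{l_a,l_b}$.

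First I would expand the moment using $K_k^{(n-k)}(x,x)=\sum_{m=0}^k\bigl(p_m^{(n-k)}(x)\bigr)^2$ together with the path identity \eqref{eq:helpmeBarry1}, giving
\[ \int_\Rr x^l\,\Dx{\lambda_n} = \frac{1}{N_n}\sum_{k=0}^n \sigma_{n,k}\sum_{m=0}^k\sum_{\rho\in Q_l} W^{(n-k)}(T_m\rho). \]
Now I would recognize that $\frac{1}{N_n}\sigma_{n,k} = \frac{\sigone_n}{N_n}\sigtwo_{n-k}$, while the inner double sum $\sum_{k=0}^n\sigtwo_{n-k}\sum_{m=0}^k(\cdot)$ can be reindexed by swapping the order of the $k$- and $m$-summations. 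The key combinatorial step is to rewrite $\frac{1}{N_n}\sum_{k=0}^n\sigma_{n,k}\sum_{m=0}^k g(m,n-k)$ as a convex combination of the quantities that already appear in Theorem~\ref{theorem-Nevaiweaklimit}, namely as $\sum_{j=0}^n \tau_{n,j}\,\bigl(\text{an }\bar\mu\text{-type average}\bigr)$ with the Riesz weights $\tau_{n,j}=\frac{\sigone_n}{N_n}\frac{1}{\sigone_j}$ from \eqref{eq:secondnorlund}; here one uses $\sigone_j=(\sum_{i=0}^j\sigtwo_{j-i})^{-1}$ to turn the partial sums of $\sigtwo$ into $1/\sigone_j$. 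After this bookkeeping, $\int_\Rr x^l\,\Dx{\lambda_n}$ becomes $\sum_{j=0}^n\tau_{n,j}\,c_j^{(l)}$ where $c_j^{(l)}=\int_\Rr x^l\,\Dx{\bar\mu_j}$ (or a closely related truncated version thereof).

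The main obstacle is precisely making that reindexing clean: the diagonal truncation $m\le k$ interacts with the two-index weight $\sigma_{n,k}=\sigone_n\sigtwo_{n-k}$, and one must check that the resulting weights are exactly the regular Riesz weights $\tau_{n,j}$ of Lemma~\ref{lemma-riesz}, not merely comparable to them. Once $\int_\Rr x^l\,\Dx{\lambda_n}=\sum_{j}\tau_{n,j}\,c_j^{(l)}$ is established, the proof closes quickly: by Theorem~\ref{theorem-Nevaiweaklimit} the sequence $c_j^{(l)}\to\sum_{\rho\in Q_l}\Sigma_{l_a,l_b}$ as $j\to\infty$, and since $T$ is a regular summation method (Lemma~\ref{lemma-riesz}), the Silverman--Toeplitz theorem (Assumption~\ref{assumption1}) yields $\sum_j\tau_{n,j}c_j^{(l)}\to\sum_{\rho\in Q_l}\Sigma_{l_a,l_b}$ as well. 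This identifies $\limweak_{n\to\infty}\lambda_n=\mu_\Sigma=\limweak_{n\to\infty}\bar\mu_n$, and the displayed integral formula for continuous $f$ follows from weak convergence together with compact support. If the truncation makes the identification with $c_j^{(l)}$ only asymptotic rather than exact, I would instead bound the difference using assumption $(i)$ — each path weight is at most $(\max(A,B))^l$, so the error terms form a doubly-indexed array to which a second application of Silverman--Toeplitz (or a direct $\eps$-argument splitting small versus large $j$) applies.
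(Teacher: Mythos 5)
Your proposal follows essentially the same route as the paper: the anti-diagonal reindexing you describe works out exactly and gives the identity $\Dx{\lambda_n} = \sum_{k=0}^n \tau_{n,k}\,\Dx{\bar{\mu}_k}$ with no error term, after which Theorem~\ref{theorem-Nevaiweaklimit} and the regularity of the Riesz method $T$ from Lemma~\ref{lemma-riesz} close the argument just as you indicate. The fallback estimate you sketch for a possible truncation mismatch is therefore unnecessary.
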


\begin{proof}
Since $S$ is a regular N\"orlund method, we can decompose the weights $\sigma_{n,k}$ and rearrange the sums in the definition \eqref{eq:summethod2} of the measure $\lambda_n$.
In this way we get
\begin{align*}
\Dx{\lambda_n} &= \frac{1}{N_n} \sum_{k=0}^n \sigma_{n,k} K_k^{(n-k)}(x,x) \Dx{\munk} = 
\frac{\sigone_{n}}{N_n} \sum_{k=0}^n  \sigtwo_{n-k} \sum_{j=0}^{k} |p_j^{(n-k)}(x)|^2 \Dx{\munk} \\
&= \frac{\sigone_{n}}{N_n} \sum_{k=0}^n  \sum_{j=0}^{k} \sigtwo_{k-j}  |p_j^{(k-j)}(x)|^2 \Dx{\mu^{(k-j)}} 
= \frac{\sigone_{n}}{N_n} \sum_{k=0}^n  \frac{1}{\sigone_{k}}\sum_{j=0}^{k} \sigma_{k,j}  |p_j^{(k-j)}(x)|^2 \Dx{\mu^{(k-j)}}\\
&= \sum_{k=0}^n  \tau_{n,k} \, \Dx{\bar{\mu}_{k}}.
\end{align*}
By Theorem \ref{theorem-Nevaiweaklimit}, we know that $\limweak_{n \to \infty} \bar{\mu}_{n} = \mu_{\Sigma}$. By Lemma \ref{lemma-riesz}, the Riesz summation 
method $T$ is regular and, thus, preserves the weak limit $\mu_{\Sigma}$, i.e. 
$$\limweak_{n \to \infty} \lambda_{n} = \limweak_{n \to \infty} \sum_{k=0}^n  \tau_{n,k} \, \bar{\mu}_{k} = \limweak_{n \to \infty} \bar{\mu}_{n} = \mu_{\Sigma}.$$ 
\end{proof}

Now, let $x_{n,j}^{(k)}$ denote the $j$th smallest root of the orthogonal polynomial $p_n^{(k)}$ and $\delta_x$ be the Dirac point measure supported at $x \in \Rr$.
We study the limiting properties of the following family of discrete measures:
\begin{align}
   \nu_{n+1} &= \frac{1}{N_n} \sum_{k=0}^{n} \sigma_{n,k} \sum_{j=1}^{k+1} \delta_{x_{k+1,j}^{(n-k)}}.  \label{eq:summethod3}
\end{align}

\begin{theorem} \label{theorem-Nevaiweaklimitpointmeasure} Let $S$ be a regular N\"orlund method and assume that $N_n \to \infty$. 
Let the family $\muk$, $k\in \Nn_0$, be in the mean Nevai class $M^{(S)}(\Sigma_{l_a,l_b})$. Then, 
the sequence $\nu_{n+1}$ converges also weakly to the equilibrium measure $\mus$ given in Theorem \ref{theorem-Nevaiweaklimit}. In particular, we have 
\begin{equation*} \label{equation-weaklimits2} 
\limweak_{n \to \infty} \nu_{n+1} = \limweak_{n \to \infty} \lambda_{n} = \limweak_{n \to \infty} \bar{\mu}_n = \mu_{\Sigma}
\end{equation*} 
and for every continuous function $f$ we get
 \begin{align*} 
 \lim_{n \to \infty} \frac{1}{N_n}  \sum_{k=0}^n \sigma_{n,k} \sum_{j=1}^{k+1}  f(x_{k+1,j}^{(n-k)}) = \int_{\Rr} f(x) d \mus.
 \end{align*}
\end{theorem}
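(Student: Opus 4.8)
The plan is to reduce the statement to Theorem~\ref{theorem-NevaiweaklimitCD} by showing that the discrete measures $\nu_{n+1}$ and the Christoffel--Darboux means $\lambda_n$ from \eqref{eq:summethod2} have the same weak limit. The connection between the two is the classical one (cf.\ \cite{Simon2009}): since, as recalled in the Preliminaries, the $k+1$ roots of $p_{k+1}^{(n-k)}$ are exactly the eigenvalues of the truncated Jacobi matrix $\Jj_k^{(n-k)}$, one has for every polynomial $f$
\[ \sum_{j=1}^{k+1} f\big(x_{k+1,j}^{(n-k)}\big) \;=\; \tr f\big(\Jj_k^{(n-k)}\big), \qquad \int_{\Rr} f(x)\,K_k^{(n-k)}(x,x)\,\Dx{\munk} \;=\; \tr\big(P_k\, f(\Jj^{(n-k)})\,P_k\big), \]
where $\Jj^{(n-k)}$ denotes the one-sided infinite Jacobi matrix of $\mu^{(n-k)}$ and $P_m$ the orthogonal projection of $\ell^2(\Nn_0)$ onto $\spann\{e_0,\dots,e_m\}$; the second identity holds because multiplication by $x$ acts on the orthonormal basis $(p_l^{(n-k)})_l$ of $L^2(\munk)$ as $\Jj^{(n-k)}$, so that $\int f\, p_l^{(n-k)} p_m^{(n-k)}\,\Dx{\munk} = \big(f(\Jj^{(n-k)})\big)_{l,m}$ and $K_k^{(n-k)}(x,x) = \sum_{l=0}^k \big(p_l^{(n-k)}(x)\big)^2$.

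The key step I would carry out next is a uniform bound on the difference of these two traces. The matrix $\Jj_k^{(n-k)}$ is the top-left $(k+1)\times(k+1)$ block of $\Jj^{(n-k)}$, and for a polynomial $f$ of degree $d$ the diagonal entry $\big(f(J)\big)_{l,l}$ only involves recurrence coefficients of index in $\{l-d,\dots,l+d\}$ (it is a sum over closed walks of length $\le d$ based at $l$). Hence $\big(f(\Jj_k^{(n-k)})\big)_{l,l} = \big(f(\Jj^{(n-k)})\big)_{l,l}$ for all $l \le k-d$, so the two traces differ in at most $d$ diagonal positions. By Definition~\ref{def:meannevai}(i) the Gershgorin discs of $\Jj_k^{(n-k)}$ and of $\Jj^{(n-k)}$ lie in $[-A-2B,A+2B]$, so $\|f(\Jj_k^{(n-k)})\|,\|f(\Jj^{(n-k)})\| \le M_f := \max_{|t|\le A+2B}|f(t)|$, and each of the offending diagonal entries is bounded by $M_f$ in modulus. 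This gives, uniformly in $k$ and $n$,
\[ \Big|\, \tr f\big(\Jj_k^{(n-k)}\big) - \tr\big(P_k\, f(\Jj^{(n-k)})\, P_k\big)\,\Big| \;\le\; 2\, d\, M_f. \]
Multiplying by $\sigma_{n,k}$, summing over $0\le k\le n$, dividing by $N_n$, and using $\sum_{k=0}^n \sigma_{n,k} = 1$ together with the expression \eqref{eq:summethod2} for $\lambda_n$ then yields
\[ \Big|\int_{\Rr} f\,\Dx{\nu_{n+1}} - \int_{\Rr} f\,\Dx{\lambda_n}\Big| \;\le\; \frac{2\, d\, M_f}{N_n}, \]
which tends to $0$ by the standing hypothesis $N_n\to\infty$.

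Taking $f(x)=x^m$ shows that $\nu_{n+1}$ and $\lambda_n$ have the same limiting moments, and Theorem~\ref{theorem-NevaiweaklimitCD} identifies the latter with the moments of $\mus$. Since the zeros $x_{k+1,j}^{(n-k)}$ are eigenvalues of $\Jj_k^{(n-k)}$, all measures $\nu_{n+1}$ are supported in the fixed compact interval $[-A-2B,A+2B]$, and they are probability measures because their total mass equals $\frac1{N_n}\sum_{k=0}^n \sigma_{n,k}(k+1) = \lambda_n(\Rr) = 1$. On a fixed compact set convergence of all moments is equivalent to weak convergence, so $\limweak_{n\to\infty}\nu_{n+1} = \mus$; the displayed limit for a continuous $f$ then follows by uniformly approximating $f$ by polynomials on $[-A-2B,A+2B]$ (using the uniform total-mass bound just established).

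I expect the main obstacle to be precisely the uniform corner estimate $|\tr f(\Jj_k^{(n-k)}) - \tr(P_k f(\Jj^{(n-k)})P_k)| \le 2dM_f$: recognizing that the discrepancy between the root-counting measure and the Christoffel--Darboux measure is only of order $\deg f$, independently of the matrix size, so that the averaging factor $1/N_n$ annihilates it, is the crux. The remaining ingredients --- the decomposition \eqref{eq:summethod2} of $\lambda_n$, the regularity of $S$, and Theorem~\ref{theorem-NevaiweaklimitCD} --- are already available.
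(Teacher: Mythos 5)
Your proof follows essentially the same route as the paper: both reduce the statement to Theorem \ref{theorem-NevaiweaklimitCD} by comparing $\nu_{n+1}$ with $\lambda_n$ through a moment estimate of size $O(\deg f)$, uniform in $k$ and $n$, which the prefactor $1/N_n$ with $N_n \to \infty$ then annihilates. The only difference is that the paper simply cites \cite[Proposition 2.3]{Simon2009} for the bound $\bigl|\int_{\Rr} x^l K_k^{(n-k)}(x,x)\,\Dx{\munk} - \sum_{j=1}^{k+1} (x_{k+1,j}^{(n-k)})^l\bigr| \le 2l(A+2B)^l$, whereas you re-derive it (correctly) via the trace comparison between $f(\Jj_k^{(n-k)})$ and the corner of $f(\Jj^{(n-k)})$ --- which is in substance the argument behind the cited result.
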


\begin{proof}
We show that $\nu_{n+1}$ and $\lambda_n$ have the same weak limits. The statement follows then from Theorem \ref{theorem-NevaiweaklimitCD} and Theorem \ref{theorem-Nevaiweaklimit}.
For this, we use explicit bounds for the difference of the measures $K_k^{(n-k)}(x,x) \Dx{\munk}$ and $\sum_{j=1}^{k+1} \delta_{x_{k+1,j}^{(n-k)}}$: according to
\cite[Proposition 2.3]{Simon2009} (see also \cite{Simanek2012WeakCO} for an alternative proof) and the fact that all measures $\munk$ are compactly supported in $[-A-2B,A+2B]$, 
we have for all $l \in \Nn_0$ and $n,k \in \Nn_0$, $n \geq k$, the estimate
\[ \left| \int_{\Rr} x^l K_k^{(n-k)}(x,x) \Dx{\munk} - \sum_{j=1}^{k+1} (x_{k+1,j}^{(n-k)})^l \right| \leq 2 l (A+2B)^l.\]
Since $S$ is a regular summation method, this directly gives
\[ \left| \frac{1}{N_n}  \sum_{k=0}^n \sigma_{n,k} \int_{\Rr} x^l K_k^{(n-k)}(x,x) \Dx{\munk} - \frac{1}{N_n}  \sum_{k=0}^n \sigma_{n,k} \sum_{j=1}^{k+1} (x_{k+1,j}^{(n-k)})^l \right| 
\leq \frac{2 l (A+2B)^l}{N_n}.\]
Since we assume that $N_n$ diverges, $\nu_{n+1} - \lambda_n$ converges weakly to the zero measure for all polynomials and, thus, for all continuous functions on $[-A-2B,A+2B]$. 
The existence of the weak limit $\limweak_{n \to \infty} \lambda_{n} = \mu_{\Sigma}$ is already established by Theorem \ref{theorem-NevaiweaklimitCD} such that
$\limweak_{n \to \infty} \nu_{n+1} = \limweak_{n \to \infty} \lambda_{n} = \mu_{\Sigma}$. 
\end{proof}

By Corollary \ref{cor:uniformnevai}, the equilibrium measure $\mus$ for families of polynomials in the uniform Nevai class $M^{(U)}(a,b)$ is given by the arcsine distribution.  
The preceding theorems now also give us the following result.

\begin{corollary} \label{cor:CDuniformnevai}
Assume that the family $\muk$ is in the uniform Nevai class $M^{(U)}(a,b)$. Then, for any regular N\"orlund method $S$ with $N_n \to \infty$ 
the measures $\lambda_n$ and $\nu_{n+1}$ converge weakly to the equilibrium measure
\[ \Dx{\mus} = \frac{1}{\pi} \frac{\chi_{[a-2b,a+2b]}(x)}{\sqrt{4b^2 - (x-a)^2}}\Dx{x}. \]
\end{corollary}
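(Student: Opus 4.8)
The plan is to assemble the three results already established in the preceding two sections, since this corollary is precisely their specialization to the case where all recurrence coefficients share a common limit. First I would invoke Corollary \ref{cor:uniformnevai}: a family $\muk$ in the uniform Nevai class $M^{(U)}(a,b)$ lies, for \emph{every} regular summation method and in particular for a regular N\"orlund method $S$, in the mean Nevai class $M^{(S)}(\Sigma_{l_a,l_b})$ with $\Sigma_{l_a,l_b} = a^{l_a} b^{l_b}$, and the associated equilibrium measure is the arcsine distribution
\[ \Dx{\mus} = \frac{1}{\pi}\,\frac{\chi_{[a-2b,a+2b]}(x)}{\sqrt{4b^2-(x-a)^2}}\,\Dx{x}. \]

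Next, since $S$ is assumed to be a regular N\"orlund method, the hypotheses of Theorem \ref{theorem-NevaiweaklimitCD} are satisfied verbatim, so $\limweak_{n\to\infty}\lambda_n = \mus$. Adjoining the extra assumption $N_n \to \infty$, the hypotheses of Theorem \ref{theorem-Nevaiweaklimitpointmeasure} are also met, which yields $\limweak_{n\to\infty}\nu_{n+1} = \mus$. Substituting the explicit arcsine form of $\mus$ from the first step then gives the asserted weak convergence for both $\lambda_n$ and $\nu_{n+1}$, completing the argument.

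I do not expect any real obstacle here; the only points worth checking are that the chain of invocations has no gap — namely that a regular N\"orlund method is in particular a regular summation method (so that Corollary \ref{cor:uniformnevai} applies to identify $\mus$), and that the measures $\lambda_n$ and $\nu_{n+1}$ built from the N\"orlund weights $\sigma_{n,k}=\sigone_n\sigtwo_{n-k}$ are exactly the objects to which Theorems \ref{theorem-NevaiweaklimitCD} and \ref{theorem-Nevaiweaklimitpointmeasure} refer. The corollary is thus a packaging of the machinery of Sections 3 and 4 in the familiar setting where the arcsine distribution reappears as the equilibrium measure.
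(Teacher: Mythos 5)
Your proposal is correct and follows exactly the paper's route: the paper introduces this corollary with the remark that Corollary \ref{cor:uniformnevai} identifies $\mus$ as the arcsine distribution and that "the preceding theorems" (Theorems \ref{theorem-NevaiweaklimitCD} and \ref{theorem-Nevaiweaklimitpointmeasure}) then give the weak convergence of $\lambda_n$ and $\nu_{n+1}$. Your checks that a regular N\"orlund method is in particular a regular summation method, and that the extra hypothesis $N_n \to \infty$ is only needed for $\nu_{n+1}$, are exactly the right points to verify.
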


\section{Weak limits related to ultraspherical polynomials}

\begin{figure}[htb]
 \centering 
 \includegraphics[width= 0.32\textwidth]{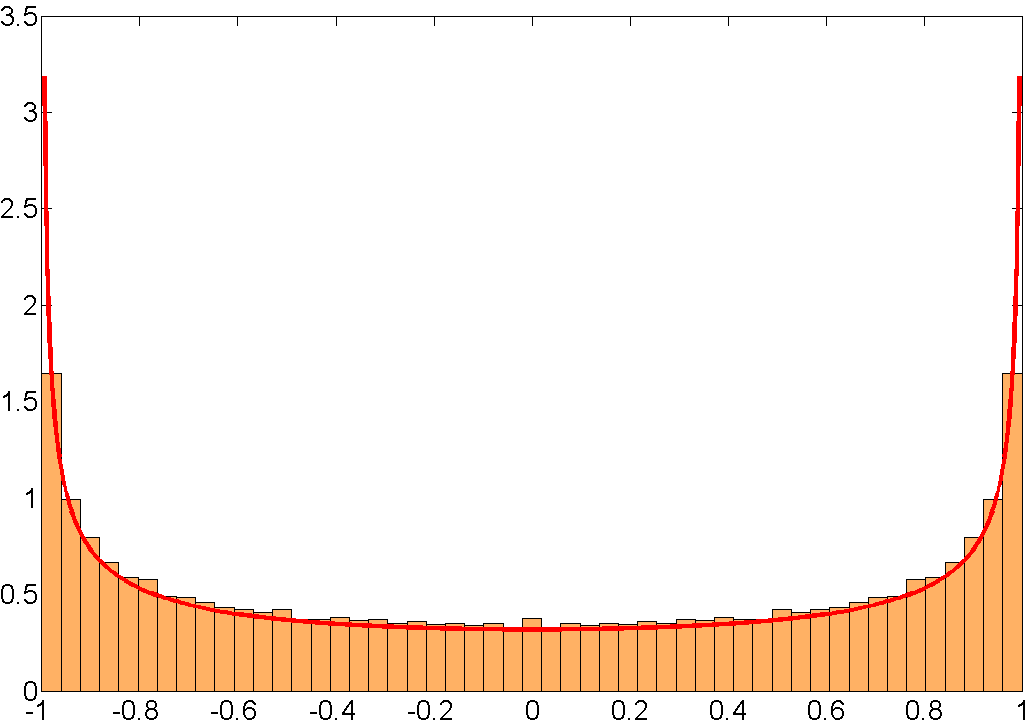} 
 \includegraphics[width= 0.32\textwidth]{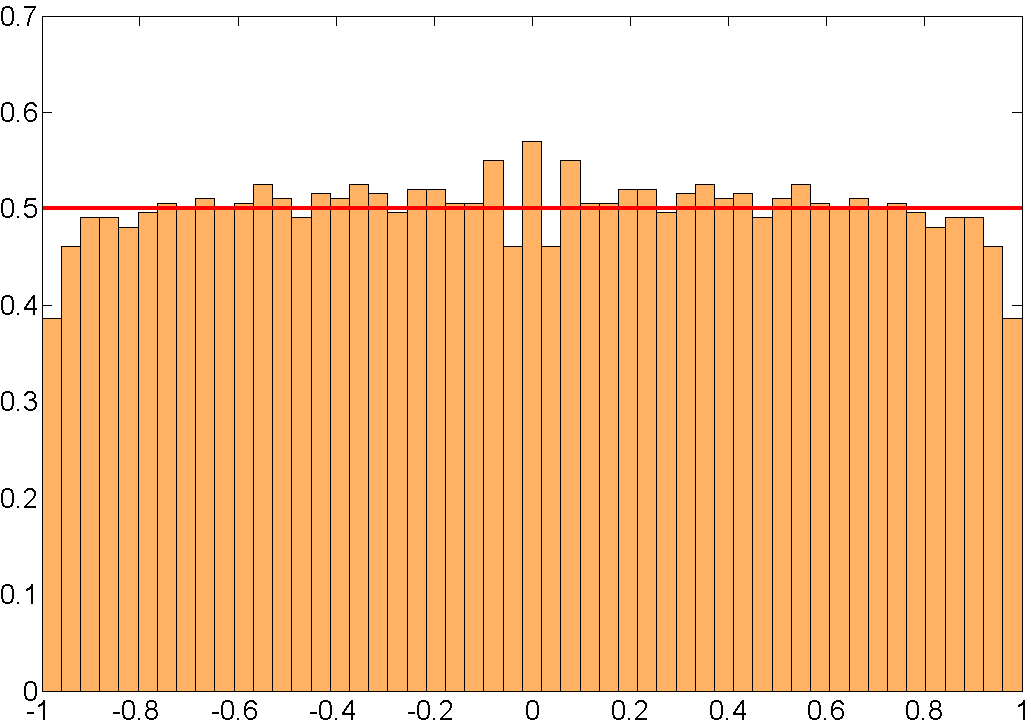}
 \includegraphics[width= 0.32\textwidth]{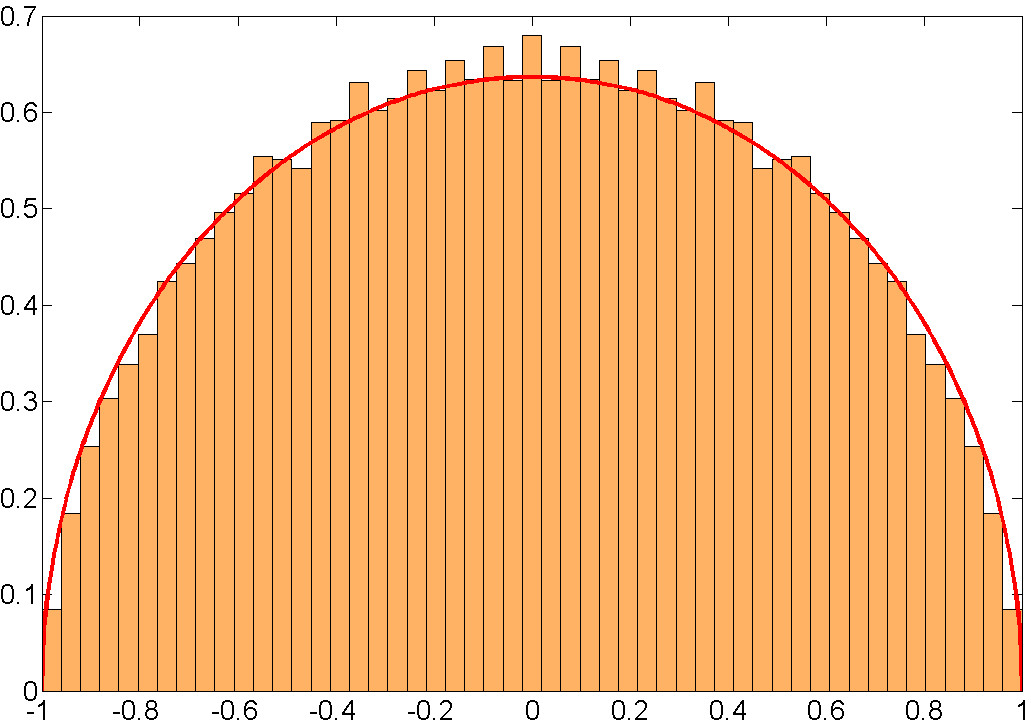} 
 \caption{Graphical illustration of the mean measure $\nu_{n+1}$, $n = 100$, given in definition \eqref{eq:summethod3}. The mean distribution $\nu_{101}$ of the roots
 for families of polynomials in three different Nevai classes is shown orange histograms. Left: the used family $p_n^{(k)} = 
 p_n^{(0,2 \pi +1)}$ is in the uniform Nevai class $M^{(U)}(0,\frac12)$, the equilibrium measure (red) is the arcsine distribution derived in Corollary \ref{cor:CDuniformnevai}. Center: the family $p_n^{(k)} = 
 p_n^{(k,2 \pi +1)}$ is in the mean Nevai class $M^{(C,1)}(\Sigma_{l_a,l_b})$ considered in Corollary \ref{corllary-multivariateweaklimitlegendre}.
 The equilibrium measure (red)
 is the uniform distribution on $[-1,1]$. Right: 
 the ultraspherical family $p_n^{(k)} = p_n^{(k,2 \pi +1)}$ is also in the mean Nevai class $M^{(C,2)}(\Sigma_{l_a,l_b})$ considered in
 Theorem \ref{Theorem-multivariateweaklimitgegenbauerpolynomials}. The corresponding equilibrium measure (red) is $\frac{2}{\pi} \sqrt{1-x^2}\Dx{x}$. }
\end{figure}

As a more concrete example of mean weak limits we consider families of ultraspherical polynomials. 
Using an additional fixed parameter $\lambda > - \frac12$, we consider the family of ultraspherical polynomials 
$p_{n}^{(k,\lambda)}(x)$, $n \in \Nn_0$, $k \in \Nn_0$, orthogonal on $[-1,1]$ with respect to 
the measure $\Dx{\muklambda}(x) = (1-x^2)^{k+\lambda-\frac12} \Dx(x)$. The recurrence coefficients of the polynomials $p_{n}^{(k,\lambda)}(x)$ are explicitly given by (see \cite[p. 29]{Gautschi})
\[a_{k}^{(n-k,\lambda)} = 0, \quad b_{k}^{(n-k,\lambda)} = \frac{1}{2} \sqrt{\frac{k (2 n-k + 2\lambda-1)}{(n + \lambda-1)(n + \lambda)}}.\]
Since, the values $a_{k}^{(n-k,\lambda)} = 0$ are all zero, we have $\Sigma_{l_a,l_b} = 0$ if $l_a \geq 1$. Therefore only the values $\Sigma_{0,l_b}$, $l_b \in 2\Nn$ are relevant
in the determination of the equilibrium measure $\mus$. We list some useful summation methods for the ultraspherical polynomials. \\

{ \noindent \bf 5.1. Arithmetic mean.}
The first summation method is the standard arithmetic mean given by the weights $\sigma_{n,k}^{(C,1)} = \frac1{n+1}$. 
The arithmetic mean is a regular N\"orlund method and can be decomposed as $\sigma_{n,k}^{(C,1)} = \sigone_n^{(C,1)} \sigtwo_{n-k}^{(C,1)}$ with $\sigone_n^{(C,1)} = \frac1{n+1}$ 
and $\sigtwo_{n-k}^{(C,1)}=1$. 
The normalizing constant $N_n$ is given by
\[N_n^{(C,1)} = \frac{1}{n+1} \sum_{k=0}^n (k+1) = \frac{n+2}{2}.\]
By using the definition of the Riemann integral and the explicit formulas for $a_{k}^{(n-k,\lambda)}$, $b_{k}^{(n-k,\lambda)}$, 
we can compute the values $\Sigma_{0,l_b}$, $l_b \in 2\Nn$, for the family $\muklambda$:
\begin{align*} 
\Sigma_{0,l_b} &= \lim_{n\to \infty} 2^{-l_b} \sum_{k=0}^{n} \frac1{n+1} \frac{k^{\frac{l_b}{2}} (2 n-k + 2\lambda-1)^{\frac{l_b}{2}}}{(n + \lambda-1)^{\frac{l_b}{2}}(n + \lambda)^{\frac{l_b}{2}}} \\
               &= 2^{-l_b} \lim_{n\to \infty} \left( \frac{n^2}{(n + \lambda-1)(n+\lambda)}\right)^{\frac{l_b}{2}} \lim_{n\to \infty} \sum_{k=0}^{n} 
               \frac1{n+1} \left( \frac{k}{n}\right)^{\frac{l_b}{2}} \left(2 -\frac{k}{n} + \frac{2\lambda-1}{n}\right)^{\frac{l_b}{2}} \\ 
               &= 2^{-l_b} \lim_{n\to \infty} \sum_{k=1}^{n} 
               \frac1n \left( \frac{k}{n}\right)^{\frac{l_b}{2}} \left(2 -\frac{k}{n} \right)^{\frac{l_b}{2}} \\
               &= 2^{-l_b} \int_0^1 (x(2-x))^{\frac{l_b}{2}} \Dx{x} = 2^{-l_b-1} \int_{-1}^1 (1-x^2)^{\frac{l_b}{2}} \Dx{x} = 
               \frac{\Gamma(\frac{3}{2})}{2^{l_b}}\frac{\Gamma(\frac{l_b+2}{2})}{\Gamma(\frac{l_b+3}{2})}.
\end{align*}
The final identity is a standard formulation in terms of the Gamma function. \\

{ \noindent \bf 5.2. Legendre summation.}
As a second summation method, we consider a summation method related to the addition formula for the Legendre polynomials. The 
weights for this summation method are defined by $\sigma_{n,k}^{(L)} = \sigone_n^{(L)} \sigtwo_{n-k}^{(L)}$ with 
$$\sigone_n^{(L)} = \frac{1}{2n+1} \quad \text{and} \quad \sigtwo_k^{(L)} = \left\{ \begin{array}{ll}
                                                                         1, & k=0 \\
                                                                         2, & k > 0.
                                                                         \end{array}
\right. $$
Exactly as the arithmetic mean, also this summation is a regular N\"orlund method. For the normalizing constant $N_n^{(L)}$ we get
\[N_n^{(L)} = \frac{1}{2n+1} \left( \sum_{k=0}^n ( 2 k+1) \right) = \frac{(n+1)^2}{2n+1}.\]
In the same way as for the arithmetic mean, we get the following limits $\Sigma_{0,l_b}$ for $l_b \in 2\Nn$:
\begin{align*} 
\Sigma_{0,l_b} &= \lim_{n\to \infty} 2^{-l_b} \sum_{k=1}^{n} \frac{2}{2n+1} \frac{k^{\frac{l_b}{2}} (2 n-k + 2\lambda-1)^{\frac{l_b}{2}}}{(n + \lambda-1)^{\frac{l_b}{2}}(n + \lambda )^{\frac{l_b}{2}}} 
 = \frac{\Gamma(\frac{3}{2})}{2^{l_b}}\frac{\Gamma(\frac{l_b+2}{2})}{\Gamma(\frac{l_b+3}{2})}.
\end{align*}

{ \noindent \bf 5.3. Ces\`{a}ro $(C,\alpha)$ summation.} A generalization of the arithmetic mean is Ces\`{a}ro $(C,\alpha)$ summation. For $\alpha > 0$, this summation method is regular with 
the weights $\sigma_{n,k}^{(C,\alpha)} = \sigone_n^{(C,\alpha)} \sigtwo_{n-k}^{(C,\alpha)}$ given by
$$\sigone_n^{(C,\alpha)} = \frac{1}{\binom{n+\alpha}{n}} \quad \text{and} \quad \sigtwo_k^{(C,\alpha)} = \binom{k+\alpha-1}{k}. $$  
For the normalizing constant $N_n^{(C,\alpha)}$ we get
\[N_n^{(C,\alpha)} = \frac{\binom{n+\alpha+1}{n}}{\binom{n+\alpha}{n}} = \frac{n + \alpha  + 1}{\alpha+1}.\]
We calculate now the limits $\Sigma_{0,l_b}$, $l_b \in 2\Nn$, for the family of ultraspherical measures $\Dx{\muklambda}(x) = (1-x^2)^{k+\lambda-\frac12} \Dx(x)$:
\begin{align*} 
\Sigma_{0,l_b} &= \lim_{n\to \infty} 2^{-l_b} \sum_{k=0}^{n} \frac{\binom{n-k+\alpha-1}{n-k}}{\binom{n+\alpha}{n}} 
               \frac{k^{\frac{l_b}{2}} (2 n-k + 2\lambda-1)^{\frac{l_b}{2}}}{(n + \lambda-1)^{\frac{l_b}{2}}(n + \lambda)^{\frac{l_b}{2}}} \\          
               &= 2^{-l_b} \frac{\Gamma(\alpha+1)}{\Gamma(\alpha)}\lim_{n\to \infty} \sum_{k=0}^{n} 
               \frac1n \left( 1 - \frac{k}{n}\right)^{\alpha-1} \left( \frac{k}{n}\right)^{\frac{l_b}{2}} \left(2 -\frac{k}{n} \right)^{\frac{l_b}{2}} \\
               &= 2^{-l_b} \frac{\Gamma(\alpha+1)}{\Gamma(\alpha)} \int_0^1 (1-x)^{\alpha-1}(x(2-x))^{\frac{l_b}{2}} \Dx{x} = 2^{-l_b} \frac{\alpha}{2}\int_{-1}^1 |x|^{\alpha-1}(1-x^2)^{\frac{l_b}{2}} \Dx{x} \\ 
               &= \frac{\Gamma(\frac{\alpha+2}{2})}{2^{l_b}}\frac{\Gamma(\frac{l_b+2}{2})}{\Gamma(\frac{\alpha+l_b+2}{2})} 
               = 2^{-l_b} \binom{\frac{\alpha+l_b}{2}}{\frac{\alpha}{2}}^{-1}.
\end{align*}

{ \noindent \bf 5.4. Gegenbauer summation.}
A direct generalization of the Legendre mean is the Gegenbauer summation $(G,\nu)$ for $\nu > 0$. 
The weights $\sigma_{n,k}^{(G,\nu)} = \sigone_n^{(G,\nu)} \sigtwo_{n-k}^{(G,\nu)}$ are in this case given by
$$\sigone_n^{(G,\nu)} = \frac{\Gamma(2 \nu + 1)\Gamma(n+1)}{(2n+2\nu) \Gamma(n+2\nu) } \quad \text{and} 
\quad \sigtwo_k^{(G,\nu)} = \frac{(2k+2\nu-1) \Gamma(k+2\nu-1) }{\Gamma(2 \nu )\Gamma(k+1)}. $$  
This formula is well-defined for all $n,k \in \Nn_0$ and $\nu > 0$, $\nu \neq \frac12$. In the case $\nu = \frac12$ we define the weights by 
taking the limit $ \nu \to \frac12$. In this way, we obtain $\sigtwo_k^{(G,1/2)} = \sigtwo_k^{(L)}$, i.e. the Gegenbauer summation for 
$\nu = \frac12$ corresponds to the Legendre summation discussed before. 
For the normalizing constant $N_n^{(G,\nu)}$ we have
\[N_n^{(G,\nu)} = \frac{(2n+2\nu+1) (n+2\nu) }{(2n+2\nu)(2 \nu + 1 )}.\]
The identities for $\sigone_n^{(G,\nu)}$ and $N_n^{(G,\nu)}$ can be deduced easily from a relation of the Gegenbauer summation
to the Ces\`{a}ro means. Namely, for the sequences $\sigtwo_k^{(G,\nu)}$ we have the relations
\begin{equation} \label{eq:relcesarogegenbauer}
\sigtwo_k^{(G,\nu)} = \left\{ \begin{array}{ll}
                                                                         \sigtwo_k^{(C,2\nu)}, & k=0, \\
                                                                         \sigtwo_k^{(C,2\nu)}+\sigtwo_{k-1}^{(C,2\nu)}, & k > 0.
                                                                         \end{array}
\right.
\end{equation}
This immediately implies also for the sequence $\sigone_n$ the relation
\begin{equation} \label{eq:relcesarogegenbauer2}
\sigone_n^{(G,\nu)} = \left\{ \begin{array}{ll}
                                                                         \sigone_n^{(C,2\nu)}, & n=0, \\
                                                                         \left(\frac{1}{\sigone_n^{(C,2\nu)}}+\frac{1}{\sigtwo_{n-1}^{(C,2\nu)}}\right)^{-1}, & n > 0.
                                                                         \end{array}
\right. 
\end{equation}
Using the relations \eqref{eq:relcesarogegenbauer} and \eqref{eq:relcesarogegenbauer2} gives the above stated explicit formulas for $\sigone_n^{(G,\nu)}$ and $N_n^{(G,\nu)}$, and,
even more, we obtain the limits $\Sigma_{0,l_b}$ for the family of ultraspherical measures from the corresponding limits of the Ces\`{a}ro means:
\begin{align*} 
\Sigma_{0,l_b} = 2^{-l_b} \binom{\nu + \frac{l_b}{2}}{\nu}^{-1}, \qquad \text{for $\nu > 0$ and $l_b \in 2 \Nn$.}
\end{align*}
Note that in the case $\nu = \frac12$ we obtain precisely the derived formula for the Legendre summation. There is a relation of
the weights $\sigma_{n,k}^{(G,\lambda)}$, $\lambda > 0$, with the addition formula of the ultraspherical polynomials $p_k^{(n-k,\lambda)}$. 
Applying the general addition formula \cite[3.15.1, (19)]{Erdelyi} or \cite[(18.18.8)]{OlverNist} to the orthonormal ultraspherical polynomials
$p_k^{(n-k,\lambda)}$, we obtain the following special variant of the addition formula:
\begin{equation} \label{eq:additionultraspherical}
\frac{1}{m^{(\lambda)}} = \sum_{k=0}^n \sigma_{n,k}^{(G,\lambda)} (1-x^2)^{n-k} |p_{k}^{(n-k,\lambda)}(x)|^2
\end{equation}
where $m^{(\lambda)} = \int_{-1}^{1} (1-x^2)^{\lambda-\frac{1}{2}}\Dx{x} = \pi 2^{1-2 \lambda} \binom{2\lambda-1}{\lambda}$ and $x \in [-1,1]$. In the Legendre case $\lambda = \frac{1}{2}$, 
a simplified version of the addition formula can be obtained from \cite[(18.18.9)]{OlverNist}:
\begin{equation} \label{eq:additionlegendre}
\frac{2n+1}{2} = |p_{n}^{(0,\frac12)}(x)|^2 + 2 \sum_{k=1}^{n} (1-x^2)^{k} |p_{n-k}^{(k,\frac12)}(x)|^2.
\end{equation}
The formula \eqref{eq:additionlegendre} can alternatively be regarded as a version of the addition theorem for spherical harmonics. The two given formulas are also a special case
of a more general addition formula for Jacobi polynomials, see \cite{Koornwinder1972}. Both formulas turn out to be very useful when deriving explicit identities for 
the equilibrium measure in the ultraspherical case.\\

{\noindent \bf 5.5. Weak limits related to Ces\`{a}ro means of ultraspherical polynomials.}

\begin{theorem} \label{Theorem-multivariateweaklimitgegenbauerpolynomials}
Suppose that the family $\muk$, $k \in \Nn_0$ is in the mean Nevai class $M^{(C,\alpha)}(\Sigma_{l_a,l_b})$ with $\alpha > 0$ and limits $\Sigma_{l_a,l_b}$ given by 
\begin{equation} \label{eq:sigmalimits}
\Sigma_{l_a,l_b} =  \delta_{0,l_a} \, 2^{-l_b}\binom{\frac{\alpha+l_b}{2}}{\frac{\alpha}{2}}^{-1}.                                                                      
\end{equation}
Then, the equilibrium measure $\mus$ is given as
\[ \Dx{\mus} = \frac{1}{m^{(\frac{\alpha}{2})}} \chi_{[-1,1]}(x) (1 - x^2)^{\frac{\alpha-1}{2}}\Dx{x}. \]
with the normalization $m^{(\frac{\alpha}{2})} = \int_{-1}^{1} (1-x^2)^{\frac{\alpha-1}{2}}\Dx{x} = \pi 2^{1-\alpha} \binom{\alpha-1}{\frac{\alpha}{2}}$.
In particular, if $f$ is a continuous function on $[-1,1]$, we have the following weak limits:
\begin{align} \label{equation-weaklimitgegenbauerpolynomials1}
& \lim_{n\to \infty} \sum_{k=0}^n \frac{\binom{n-k+\alpha-1}{n-k}}{\binom{n+\alpha}{n}} \int_{-1}^{1} f(x) |p_{k}^{(n-k)}(x)|^2 \Dx{\munk}(x) 
= \frac{1}{m^{(\frac{\alpha}{2})}} \int_{-1}^{1} f(x) (1-x^2)^{\frac{\alpha-1}{2}}\Dx{x},\\
& \lim_{n\to \infty} \sum_{k=0}^n \frac{\binom{n-k+\alpha-1}{n-k}}{\binom{n+\alpha+1}{n}} \int_{-1}^{1} f(x) K_{k}^{(n-k)}(x,x) \Dx{\munk}(x)
= \frac{1}{m^{(\frac{\alpha}{2})}} \int_{-1}^{1} f(x) (1-x^2)^{\frac{\alpha-1}{2}}\Dx{x}, \label{equation-weaklimitgegenbauerpolynomials2} \\
& \lim_{n\to \infty} \sum_{k=0}^n \frac{\binom{n-k+\alpha-1}{n-k}}{\binom{n+\alpha+1}{n}} \sum_{j=1}^{k+1} f(x_{k+1,j}^{(n-k)})
= \frac{1}{m^{(\frac{\alpha}{2})}} \int_{-1}^{1} f(x) (1-x^2)^{\frac{\alpha-1}{2}}\Dx{x}. \label{equation-weaklimitgegenbauerpolynomials3}
\end{align}
\end{theorem}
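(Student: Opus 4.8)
The plan is to identify $\mus$ through its moments, using the path-sum formula established in the proof of Theorem~\ref{theorem-Nevaiweaklimit}. That proof gives $\int_\Rr x^l\Dx{\mus}=\sum_{\rho\in Q_l}\Sigma_{l_a(\rho),l_b(\rho)}$, where $l_a(\rho)$ and $l_b(\rho)$ count the horizontal and the diagonal steps of $\rho$. Since \eqref{eq:sigmalimits} makes $\Sigma_{l_a,l_b}$ vanish whenever $l_a\ge 1$, only the paths in $Q_l$ with no horizontal step survive; such a path has equally many up- and down-steps, so it occurs only for even $l=2m$, and there are exactly $\binom{2m}{m}$ of them. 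I expect this count to be the one genuinely non-mechanical point: $\binom{2m}{m}$ is the number of \emph{all} length-$2m$ lattice bridges from $0$ to $0$ with steps $\pm1$, not merely the Dyck paths, because in \eqref{eq-help1} every path is lifted by the shift $T_k$ into the range of indices where the recurrence coefficients obey the mean Nevai limits, so even a path that dips below the origin in the unshifted picture contributes $\Sigma_{0,2m}$; the set $Q_l$ itself carries no sign restriction. Consequently the odd moments of $\mus$ vanish and $\int_\Rr x^{2m}\Dx{\mus}=\binom{2m}{m}\Sigma_{0,2m}=\binom{2m}{m}\,2^{-2m}\binom{\alpha/2+m}{\alpha/2}^{-1}$.

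Next I would rewrite this as a ratio of Gamma functions: with $\binom{2m}{m}=4^{m}\Gamma(m+1/2)/(\sqrt\pi\,m!)$ and $\binom{\alpha/2+m}{\alpha/2}=\Gamma(\alpha/2+m+1)/(m!\,\Gamma(\alpha/2+1))$ one obtains $\int_\Rr x^{2m}\Dx{\mus}=\Gamma(m+1/2)\Gamma(\alpha/2+1)/(\sqrt\pi\,\Gamma(m+\alpha/2+1))$. On the other side, the substitution $u=x^2$ turns $\int_{-1}^1 x^{2m}(1-x^2)^{(\alpha-1)/2}\Dx{x}$ into the Beta integral $B(m+1/2,(\alpha+1)/2)=\Gamma(m+1/2)\Gamma((\alpha+1)/2)/\Gamma(m+\alpha/2+1)$, while $m^{(\alpha/2)}=B(1/2,(\alpha+1)/2)=\sqrt\pi\,\Gamma((\alpha+1)/2)/\Gamma(\alpha/2+1)$; dividing shows that $\frac{1}{m^{(\alpha/2)}}\chi_{[-1,1]}(x)(1-x^2)^{(\alpha-1)/2}\Dx{x}$ has exactly the moments of $\mus$ (its odd moments vanishing by symmetry). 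Both measures are compactly supported --- the one following the remark after Theorem~\ref{theorem-Nevaiweaklimit} for $\mus$, and $[-1,1]$ for the density --- and a compactly supported positive measure is determined by its moments, so the two coincide, which is the asserted formula for $\mus$. The closed form $m^{(\alpha/2)}=\pi 2^{1-\alpha}\binom{\alpha-1}{\alpha/2}$ is simply the $\lambda=\alpha/2$ instance of the earlier identity $m^{(\lambda)}=\pi 2^{1-2\lambda}\binom{2\lambda-1}{\lambda}$.

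Finally, the three weak limits follow by inserting the explicit Ces\`{a}ro data into the general theorems. The $(C,\alpha)$ method with $\alpha>0$ is a regular N\"orlund method with $\sigma_{n,k}^{(C,\alpha)}=\binom{n-k+\alpha-1}{n-k}/\binom{n+\alpha}{n}$ and $N_n^{(C,\alpha)}=(n+\alpha+1)/(\alpha+1)\to\infty$, so Theorem~\ref{theorem-Nevaiweaklimit} yields \eqref{equation-weaklimitgegenbauerpolynomials1}, and Theorems~\ref{theorem-NevaiweaklimitCD} and \ref{theorem-Nevaiweaklimitpointmeasure} yield \eqref{equation-weaklimitgegenbauerpolynomials2} and \eqref{equation-weaklimitgegenbauerpolynomials3}. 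The weight $\sigma_{n,k}/N_n$ appearing in $\lambda_n$ and $\nu_{n+1}$ simplifies to $\binom{n-k+\alpha-1}{n-k}/\binom{n+\alpha+1}{n}$ because $\binom{n+\alpha}{n}/\binom{n+\alpha+1}{n}=(\alpha+1)/(n+\alpha+1)$, which is exactly the weight displayed in those two formulas; the remaining work is only the Beta-function identity of the second paragraph together with this bit of Ces\`{a}ro bookkeeping.
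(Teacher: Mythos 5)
Your proposal is correct, but the identification of the equilibrium measure goes by a genuinely different route than the paper's. You extract the moments of $\mus$ directly from the path-sum formula in the proof of Theorem \ref{theorem-Nevaiweaklimit}: since \eqref{eq:sigmalimits} kills every path containing a horizontal step, only the $\binom{2m}{m}$ unrestricted bridges of even length $2m$ survive, and your remark that $Q_l$ carries no positivity constraint --- the finitely many shifts $T_k$ with $k\leq l/2$ that could hit vanishing coefficients are suppressed by $\sigma_{n,k}\to 0$ --- is exactly the right justification for counting all bridges rather than Dyck paths. The resulting moments $\Gamma(m+\tfrac12)\Gamma(\tfrac{\alpha}{2}+1)/(\sqrt{\pi}\,\Gamma(m+\tfrac{\alpha}{2}+1))$ do match the Beta-integral moments of $\frac{1}{m^{(\alpha/2)}}(1-x^2)^{(\alpha-1)/2}$ on $[-1,1]$ (I checked the Gamma-function bookkeeping, including the $\alpha=1$ sanity value $\frac{1}{2m+1}$), and moment determinacy on a compact support finishes the identification. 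The paper never computes a single moment: it uses the fact from Theorem \ref{theorem-Nevaiweaklimit} that $\mus$ depends only on the numbers $\Sigma_{l_a,l_b}$, observes that the concrete ultraspherical family $(1-x^2)^{k+\frac{\alpha-1}{2}}\Dx{x}$ under the Gegenbauer summation $(G,\tfrac{\alpha}{2})$ produces the same $\Sigma_{l_a,l_b}$ as $(C,\alpha)$, and then reads the limit off exactly --- for every finite $n$, not merely asymptotically --- from the addition formula \eqref{eq:additionultraspherical} (resp.\ \eqref{eq:additionlegendre} for $\alpha=1$). Your argument buys self-containedness: no addition formula, no auxiliary summation method, only a lattice-path count and the duplication formula. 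The paper's argument buys the structural point it wants to make, namely that \eqref{equation-weaklimitgegenbauerpolynomials1} is an ``asymptotic weak addition formula,'' and it sidesteps the combinatorics entirely. The remaining steps --- regularity of $(C,\alpha)$, $N_n^{(C,\alpha)}=(n+\alpha+1)/(\alpha+1)\to\infty$, the simplification $\sigma_{n,k}^{(C,\alpha)}/N_n^{(C,\alpha)}=\binom{n-k+\alpha-1}{n-k}/\binom{n+\alpha+1}{n}$, and the appeal to Theorems \ref{theorem-NevaiweaklimitCD} and \ref{theorem-Nevaiweaklimitpointmeasure} --- are identical in both treatments.
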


\begin{proof}
By the description given in Section 5.3, the Ces\`{a}ro mean $(C,\alpha)$ is a regular N\"orlund summation method. We further know that $N_n^{(C,\alpha)} \to \infty$ as $n \to \infty$. 
Then, by the Theorems \ref{theorem-NevaiweaklimitCD}, \ref{theorem-Nevaiweaklimitpointmeasure} and \ref{theorem-Nevaiweaklimit}, we have an equilibrium measure $\mus$ determined by 
the numbers $\Sigma_{l_a,l_b}$ given in \eqref{eq:sigmalimits} such that
\[ \limweak_{n \to \infty} \nu_{n+1} = \limweak_{n \to \infty} \lambda_{n} = \limweak_{n \to \infty} \bar{\mu}_n = \mus.\]
In particular the limits in \eqref{equation-weaklimitgegenbauerpolynomials1}, \eqref{equation-weaklimitgegenbauerpolynomials2} and \eqref{equation-weaklimitgegenbauerpolynomials3} are 
identical and it only remains to determine the explicit form of the equilibrium measure $\mus$. To obtain this formula, we consider the slightly modified Gegenbauer summation 
method given in Section 5.4. For the family of measures $\Dx{\muklambda}(x) = (1-x^2)^{k+\lambda-\frac12} \Dx(x)$, $k \in \Nn_0$, $\lambda > 0$, The Gegenbauer summation 
method $(G,\frac{\alpha}{2})$ gives the same values $\Sigma_{l_a,l_b}$ as the Ces\`{a}ro mean $(C,\alpha)$ and, therefore by Theorem \ref{theorem-Nevaiweaklimit}, 
also the weak equilibrium measures $\mus$ are identical. Using in addition the addition formula \eqref{eq:additionultraspherical} related to the ultraspherical polynomials
$p_{k}^{(n-k,\frac{\alpha}{2})}(x)$ we obtain the following identities:
\begin{align*} 
& \hspace{-1cm} \lim_{n\to \infty} \sum_{k=0}^n \frac{\binom{n-k+\alpha-1}{n-k}}{\binom{n+\alpha}{n}} \int_{-1}^{1} f(x) |p_{k}^{(n-k)}(x)|^2 \Dx{\munk}(x) \\
& = \lim_{n\to \infty} \sum_{k=0}^n \sigma_{n,k}^{(C,\alpha)} \int_{-1}^{1} f(x) |p_{k}^{(n-k,\frac{\alpha}{2})}(x)|^2 \Dx{\mu^{(n-k,\frac{\alpha}{2})}}(x) \\
& = \lim_{n\to \infty} \int_{-1}^{1} f(x) \sum_{k=0}^n \sigma_{n,k}^{(G,\frac{\alpha}{2})} |p_{k}^{(n-k,\frac{\alpha}{2})}(x)|^2 \Dx{\mu^{(n-k,\frac{\alpha}{2})}}(x) \\
& = \lim_{n\to \infty} \int_{-1}^{1} f(x) \frac{1}{m^{(\frac{\alpha}{2})}} (1 - x^2)^{\frac{\alpha-1}{2}}\Dx{x} =  \frac{1}{m^{(\frac{\alpha}{2})}} 
\int_{-1}^{1} f(x)  (1 - x^2)^{\frac{\alpha-1}{2}}\Dx{x}.
\end{align*}
For $\alpha = 1$ we used the respective addition formula \eqref{eq:additionlegendre} of the Legendre case. 
\end{proof}

\begin{remark}
From the details in the proof of Theorem \ref{Theorem-multivariateweaklimitgegenbauerpolynomials} we see that the statements of Theorem \ref{Theorem-multivariateweaklimitgegenbauerpolynomials}
hold also true if we replace the Ces\`{a}ro summation method $(C,\alpha)$ with the Gegenbauer summation method $(G,\frac{\alpha}{2})$. 
The limit identity \eqref{equation-weaklimitgegenbauerpolynomials1} can be regarded as an asymptotic weak addition formula for the class of measures $M^{(C,\alpha)}(\Sigma_{l_a,l_b})$. It
therefore makes sense to denote the class $M^{(C,\alpha)}(\Sigma_{l_a,l_b})$ (or also $M^{(G,\frac{\alpha}{2})}(\Sigma_{l_a,l_b})$) given in 
Theorem \ref{Theorem-multivariateweaklimitgegenbauerpolynomials} as ultraspherical mean Nevai class. 
\end{remark}

We consider some particular examples and extensions of Theorem \ref{Theorem-multivariateweaklimitgegenbauerpolynomials}. 

\begin{example}
\begin{itemize}
 \item The family $\Dx{\muk} = (1-x^2)^k (1-x)^{\lambda_1}(1+x)^{\lambda_2} \Dx{x}$ with fixed parameters $\lambda_1,\lambda_2 > -1$ is in the Nevai class $M^{(C,\alpha)}(\Sigma_{l_a,l_b})$
 with the values $\Sigma_{l_a,l_b}$ given in \eqref{eq:sigmalimits}. As for the ultraspherical polynomials, this can be checked directly using the three-term recurrence relation
 of the Jacobi polynomials (given, for example, in \cite[p. 29]{Gautschi}). Thus, Theorem \ref{Theorem-multivariateweaklimitgegenbauerpolynomials} yields the weak limits
 \eqref{equation-weaklimitgegenbauerpolynomials1}, \eqref{equation-weaklimitgegenbauerpolynomials2} and \eqref{equation-weaklimitgegenbauerpolynomials3} for this family of measures. 
 \item We consider the family $\Dx{\muk} = (1-x^2)^{k+\lambda-\frac{1}{2}} e^{(2\theta - \pi)t}|\Gamma(\lambda+k+i t)|^2 \Dx{x} $ of Pollaczek measures with 
 $x = \cos \theta$, $t = \frac{a x + b}{\sqrt{1-x^2}}$ and fixed parameters $\lambda>0$, $a \geq |b| $, see \cite[Chapter VI, 5]{Chihara}. This family of Pollaczek measures also 
 satisfies the conditions of Theorem \ref{Theorem-multivariateweaklimitgegenbauerpolynomials}. The values $\Sigma_{l_a,l_b}$ can be calculated directly from the 
 three-term recurrence coefficients of the Pollaczek polynomials in the same way as the calculations were carried out in Section 5.3 for the ultraspherical polynomials. In fact,
 for $a = b = 0$ the Pollaczek polynomials correspond to the ultraspherical polynomials. 
\end{itemize}
\end{example}

In the special case $\alpha = 1$ we get the following weak limit for the arithmetic mean. It can be regarded as an extended variant of the limit relations derived 
for co-recursive ultraspherical polynomials in \cite{ErbMathias2015}.

\begin{corollary} \label{corllary-multivariateweaklimitlegendre}
Let the family $\muk$ be in the mean Nevai class $M^{(C,1)}(\Sigma_{l_a,l_b})$ with the values $\Sigma_{l_a,l_b}$ given by 
\begin{equation} \label{eq:sigmalimitslegendre}
\Sigma_{l_a,l_b} =  \delta_{0,l_a} \, \frac{\Gamma(\frac{3}{2})}{2^{l_b}}\frac{\Gamma(\frac{l_b+2}{2})}{\Gamma(\frac{l_b+3}{2})}.                                                                    
\end{equation}
Then, the equilibrium measure $\mus$ is the uniform probability measure on $[-1,1]$ and for every continuous function $f$ on $[-1,1]$ we have
\begin{equation} \label{equation-weaklimitlegenrezeros}
\lim_{n\to \infty} \sum_{k=0}^n \frac{1}{n \!+\!1} \! \int_{-1}^{1} f(x) |p_{k}^{(n-k)}(x)|^2 \Dx{\munk}\!(x) = 
\lim_{n\to \infty}  \sum_{k=0}^{n} \sum_{j=1}^{k+1} \frac{2 f(x_{k+1,j}^{(n-k)}) }{(n+1)(n+2)}  = \frac{1}{2} \! \int_{-1}^1 f(x) \Dx{x}.
\end{equation}
\end{corollary}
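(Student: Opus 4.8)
The plan is to derive this corollary directly from Theorem~\ref{Theorem-multivariateweaklimitgegenbauerpolynomials} with the parameter choice $\alpha = 1$. First I would observe that the arithmetic mean $\sigma_{n,k} = \frac{1}{n+1}$ appearing in \eqref{equation-weaklimitlegenrezeros} is exactly the Ces\`{a}ro $(C,1)$ summation method of Section~5.3, which is a regular N\"orlund method with $N_n = \frac{n+2}{2} \to \infty$. Next I would check that the numbers in \eqref{eq:sigmalimitslegendre} coincide with the $\alpha = 1$ instance of \eqref{eq:sigmalimits}: substituting $\alpha = 1$ into $2^{-l_b}\binom{(\alpha+l_b)/2}{\alpha/2}^{-1}$ and rewriting the binomial coefficient through the Gamma function reproduces $\frac{\Gamma(3/2)}{2^{l_b}}\frac{\Gamma((l_b+2)/2)}{\Gamma((l_b+3)/2)}$, in agreement with the computation carried out for the Ces\`{a}ro means in Section~5.3.

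With these identifications, Theorem~\ref{Theorem-multivariateweaklimitgegenbauerpolynomials} applies and yields the equilibrium measure
\[ \Dx{\mus} = \frac{1}{m^{(1/2)}}\,\chi_{[-1,1]}(x)\,(1-x^2)^{0}\,\Dx{x} = \frac{1}{m^{(1/2)}}\,\chi_{[-1,1]}(x)\,\Dx{x}, \qquad m^{(1/2)} = \int_{-1}^{1}\Dx{x} = 2, \]
so that $\mus$ is the uniform probability measure on $[-1,1]$. It then remains to specialize the limit relations \eqref{equation-weaklimitgegenbauerpolynomials1} and \eqref{equation-weaklimitgegenbauerpolynomials3}: for $\alpha = 1$ the weight $\binom{n-k+\alpha-1}{n-k}/\binom{n+\alpha}{n}$ simplifies to $\frac{1}{n+1}$, while $\binom{n-k+\alpha-1}{n-k}/\binom{n+\alpha+1}{n}$ simplifies to $\binom{n+2}{2}^{-1} = \frac{2}{(n+1)(n+2)}$, and the common right-hand side $\frac{1}{m^{(1/2)}}\int_{-1}^1 f(x)(1-x^2)^{\frac{\alpha-1}{2}}\Dx{x}$ becomes $\frac12\int_{-1}^1 f(x)\Dx{x}$; this is precisely \eqref{equation-weaklimitlegenrezeros}. (The analogous Christoffel--Darboux limit \eqref{equation-weaklimitgegenbauerpolynomials2} holds as well, with the same weight $\frac{2}{(n+1)(n+2)}$, although it is not displayed in the statement.)

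There is essentially no genuine obstacle here; the argument is bookkeeping, and the only place demanding a little care is the Gamma-function manipulation identifying \eqref{eq:sigmalimitslegendre} with the $\alpha=1$ case of \eqref{eq:sigmalimits}, together with the simplification of the two binomial-coefficient weights. If one prefers an argument that does not pass through the general Ces\`{a}ro case, one can instead repeat the proof of Theorem~\ref{Theorem-multivariateweaklimitgegenbauerpolynomials} verbatim for $\alpha = 1$, using the Legendre addition formula \eqref{eq:additionlegendre} in place of the general ultraspherical addition formula \eqref{eq:additionultraspherical}; this is exactly the route indicated at the end of that proof.
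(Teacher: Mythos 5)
Your proposal is correct and follows exactly the route the paper intends: the corollary is stated as the $\alpha=1$ specialization of Theorem \ref{Theorem-multivariateweaklimitgegenbauerpolynomials}, and your verifications (the Gamma-function identity matching \eqref{eq:sigmalimitslegendre} with \eqref{eq:sigmalimits}, the simplification of the weights to $\frac{1}{n+1}$ and $\frac{2}{(n+1)(n+2)}$, and $m^{(1/2)}=2$) are all accurate. Nothing further is needed.
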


The linear function $T: \Rr \to \Rr$, $T y = 2b y + a$ maps the interval $[-1,1]$ onto $[a-2b,a+2b]$. Using this linear map, we can transfer 
Theorem \ref{Theorem-multivariateweaklimitgegenbauerpolynomials} easily to an arbitrary interval $[a-2b,a+2b]$.

\begin{corollary} \label{corllary-multivariateweaklimitgeneral}
Suppose that the family $\muk$ is in the mean Nevai class $M^{(C,\alpha)}(\Sigma_{l_a,l_b})$ with $\alpha > 0$ and limits $\Sigma_{l_a,l_b}$ given by 
\begin{equation} \label{eq:sigmalimitsgeneral}
\Sigma_{l_a,l_b} =  a^{l_a} \, b^{l_b} \binom{\frac{\alpha+l_b}{2}}{\frac{\alpha}{2}}^{-1}.                                                                      
\end{equation}
Then, the equilibrium measure $\mus$ is given by
\[ \Dx{\mus} = \frac{1}{2 \pi b^{\alpha} \binom{\alpha-1}{\frac{\alpha}{2}}} \chi_{[a - 2b, a + 2b]}(x) (b^2 - (x-a)^2)^{\frac{\alpha-1}{2}} \Dx{x}. \]
\end{corollary}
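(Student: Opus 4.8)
The plan is to reduce the statement to Theorem~\ref{Theorem-multivariateweaklimitgegenbauerpolynomials} by the affine substitution $x=Ty=2by+a$, exploiting the fact established in Theorem~\ref{theorem-Nevaiweaklimit} that for a fixed summation method the equilibrium measure $\mus$ is completely determined by the numbers $\Sigma_{l_a,l_b}$. Since $\mus$ is thereby unique, it suffices, exactly as in the proof of Corollary~\ref{cor:uniformnevai}, to exhibit \emph{one} concrete family in $M^{(C,\alpha)}(\Sigma_{l_a,l_b})$ realizing the values \eqref{eq:sigmalimitsgeneral} and to compute its equilibrium measure by hand. I would take the family $\muk$ obtained as the push-forward along $T$ of the ultraspherical measures $\muklambda$ with $\lambda=\frac{\alpha}{2}$; then its degree-$l$ orthonormal polynomial is $p_l^{(k)}(x)=p_l^{(k,\alpha/2)}\!\big(\tfrac{x-a}{2b}\big)$ (leading coefficient positive because $2b>0$), and inserting $y=\tfrac{x-a}{2b}$ into the three-term recurrence of $p_l^{(k,\alpha/2)}$, whose coefficients satisfy $a_l^{(k,\alpha/2)}=0$ and $b_l^{(k,\alpha/2)}$ as in Section 5.3, shows that $\muk$ has recurrence coefficients $a_l^{(k)}=a$ and $b_l^{(k)}=2b\,b_l^{(k,\alpha/2)}$.

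With these coefficients, condition~(i) of Definition~\ref{def:meannevai} follows from boundedness of $b_l^{(k,\alpha/2)}$, condition~(ii) is immediate since the sequences are constant in $l$ up to the fixed factor $2b$, and condition~(iii) gives, using the value $\Sigma_{0,l_b}=2^{-l_b}\binom{\frac{\alpha+l_b}{2}}{\frac{\alpha}{2}}^{-1}$ computed in Section 5.3 for the $(C,\alpha)$ mean of the ultraspherical family,
\[ \lim_{n\to\infty}\sum_{k=0}^n \sigma_{n,k}^{(C,\alpha)}\,(a_k^{(n-k)})^{l_a}(b_k^{(n-k)})^{l_b} = a^{l_a}(2b)^{l_b}\,2^{-l_b}\binom{\frac{\alpha+l_b}{2}}{\frac{\alpha}{2}}^{-1} = a^{l_a}b^{l_b}\binom{\frac{\alpha+l_b}{2}}{\frac{\alpha}{2}}^{-1}, \]
so $\muk$ lies in $M^{(C,\alpha)}(\Sigma_{l_a,l_b})$ with exactly the values \eqref{eq:sigmalimitsgeneral}; here the choice $2by+a$ rather than $by+a$ is dictated precisely by the cancellation $(2b)^{l_b}2^{-l_b}=b^{l_b}$. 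The mean measures $\bar{\mu}_n$, and likewise $\lambda_n$ and $\nu_{n+1}$, built from the $\muk$ are the push-forwards along $T$ of the corresponding objects for $\mu^{(k,\alpha/2)}$, so — $T$ being a homeomorphism — their common weak limit is $T_*$ applied to the equilibrium measure identified in the proof of Theorem~\ref{Theorem-multivariateweaklimitgegenbauerpolynomials}, namely $T_*\big(\tfrac{1}{m^{(\alpha/2)}}\chi_{[-1,1]}(y)(1-y^2)^{\frac{\alpha-1}{2}}\Dx{y}\big)$. Carrying out $x=2by+a$ (so $1-y^2=\tfrac{4b^2-(x-a)^2}{4b^2}$ and $\Dx{y}=\tfrac{\Dx{x}}{2b}$), the factor $4b^2-(x-a)^2$ comes out of $1-y^2$, the Jacobian contributes $(2b)^{-\alpha}$, and together with $m^{(\alpha/2)}=\pi 2^{1-\alpha}\binom{\alpha-1}{\frac{\alpha}{2}}$ this gives the density asserted for $\mus$ on $[a-2b,a+2b]$ with normalizing constant $\tfrac{1}{2\pi b^\alpha\binom{\alpha-1}{\alpha/2}}$.

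By Theorem~\ref{theorem-Nevaiweaklimit} the equilibrium measure attached to $M^{(C,\alpha)}(\Sigma_{l_a,l_b})$ depends only on the $\Sigma_{l_a,l_b}$, so the density just computed is $\mus$ for every family in that class. The argument is essentially bookkeeping; the only thing that needs care is keeping the affine transformation consistent across the four objects it touches — the measures, their orthonormal polynomials, the recurrence coefficients, and the moments — and noting that the $(C,\alpha)$ weights act on the summation index $k$ alone and are therefore untouched by the change of variable $x=2by+a$. I do not expect a genuine obstacle, since the hard inputs — existence of the weak limits and the explicit ultraspherical equilibrium measure — are already supplied by Theorems~\ref{theorem-Nevaiweaklimit} and \ref{Theorem-multivariateweaklimitgegenbauerpolynomials}.
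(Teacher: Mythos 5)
Your proposal is correct and follows exactly the route the paper intends: the paper offers no written proof of this corollary beyond the one-line remark that Theorem \ref{Theorem-multivariateweaklimitgegenbauerpolynomials} transfers under $Ty=2by+a$, and your push-forward argument --- transformed recurrence coefficients $a_l^{(k)}=a$, $b_l^{(k)}=2b\,b_l^{(k,\alpha/2)}$, the cancellation $(2b)^{l_b}2^{-l_b}=b^{l_b}$, and uniqueness of $\mus$ from the numbers $\Sigma_{l_a,l_b}$ via Theorem \ref{theorem-Nevaiweaklimit} --- is precisely the honest way to carry that remark out. One caveat: your change of variables correctly produces the density $\frac{1}{2\pi b^{\alpha}\binom{\alpha-1}{\alpha/2}}\,(4b^2-(x-a)^2)^{\frac{\alpha-1}{2}}$ on $[a-2b,a+2b]$, which is \emph{not} literally what the corollary prints; the printed $(b^2-(x-a)^2)^{\frac{\alpha-1}{2}}$ is negative outside $[a-b,a+b]$ and does not integrate to $1$ against the stated constant, so the statement as written contains a typo ($b^2$ should read $4b^2$), and you should flag this correction rather than assert that your computation ``gives the density asserted.''
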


%\bibliographystyle{acm}
%\bibliography{erb}

\end{document}